\newcommand{\barM}{\overline{M}}
\newtheorem{prop}{Proposition}
\newtheorem{definition}{Definition}
\newtheorem{theorem}{Theorem}
\newtheorem{observation}{Observation}
\newtheorem{corollary}{Corollary}
\title{Exact and heuristic methods for Anchor-Robust and Adjustable-Robust RCPSP}
\author[1,2]{Ad\`ele Pass-Lanneau\thanks{Corresponding author.}}
\author[1,2]{Pascale Bendotti}
\author[1,2]{Luca Brunod-Indrigo}
\date{November 3, 2020}
\affil[1]{ 
\normalsize
EDF R\&D, F-91120 Palaiseau, France
}
\affil[2]{ Sorbonne Universit\'e, CNRS, LIP6, F-75005 Paris, France
\hspace{5cm}
\texttt{\{adele.pass-lanneau, pascale.bendotti, luca.brunod-indrigo\}@edf.fr}
}
\begin{document}

\maketitle

\begin{abstract}
The concept of anchored solutions is proposed as a new robust optimization approach to the Resource-Constrained Project Scheduling Problem (RCPSP) under processing times uncertainty. The Anchor-Robust RCPSP is defined, to compute a baseline schedule with bounded makespan, sequencing decisions, and a max-size subset of jobs with guaranteed starting times, called anchored set. It is shown that the Adjustable-Robust RCPSP from the literature fits within the framework of anchored solutions. The Anchor-Robust RCPSP and the Adjustable-Robust RCPSP can benefit from each other to find both a worst-case makespan and a baseline schedule with an anchored set.
A dedicated graph model for anchored solutions is proposed for budgeted uncertainty. Compact MIP reformulations are derived for both the Adjustable-Robust RCPSP and the Anchor-Robust RCPSP. Dedicated heuristics are designed based on the graph model. For both problems, the efficiency of the proposed MIP reformulations and heuristic approaches is assessed through numerical experiments on benchmark instances.\\

\emph{Keywords:} project scheduling, RCPSP, robust 2-stage optimization, anchored solutions, MIP reformulations, heuristics.
\end{abstract}

\section{Introduction}

The Resource-Constrained Project Scheduling Problem (RCPSP) is to find a min makespan schedule for the jobs of a project under precedence constraints and scarce resource availability. This problem finds a large variety of applications in the industry, see, e.g., \citep{ArtiguesDemasseyNeronRCPSP} for a survey. The RCPSP is a classical problem that has received a lot of attention from the scheduling community. It is known to be not only NP-hard but also computationally challenging to solve. Methods for solving the RCPSP include heuristic algorithms, or exact methods based, e.g., on constraint programming or integer programming.

The RCPSP is ubiquitous in applications, where it often appears with uncertain parameters, as it can be expected from any real-life problem.
Project scheduling under uncertainty has been studied with a variety of criteria and uncertainty modelling choices, see \citep{HerroelenLeusProjectSchedulingUncertainty} or \citep{HazirUlusoy2020} for surveys. In the present work, we focus on uncertainty in the processing times of the jobs. In the spirit of robust optimization, it is assumed that processing times of jobs lie in a given uncertainty set.

As pointed out in \citep{HazirUlusoy2020}, research on robust optimization for the RCPSP is rather scarce, compared to stochastic project scheduling.
In \citep{ArtiguesLeusTallaNobibon2013} the authors studied a robust problem under a min regret criterion.
In \citep{Bruni2016} the authors introduced the Adjustable-Robust RCPSP, derived from the general concept of adjustable robustness from \citep{BGGNAdjustable}.
Sequencing decisions (corresponding to resource allocation decisions) are fixed before uncertainty is known; by contrast the starting times of jobs adjust to the uncertainty realization.
The goal is to minimize the worst-case makespan.
This problem was solved using Benders-like decomposition approaches in \citep{Bruni2016,Bruni2018}.
While writing up this article, we found out about \citep{BoldGoerigk2020} where the authors studied the Adjustable-Robust RCPSP with similar results to ours for this problem.

Adjustable-robustness outputs a worst-case makespan, but it gives no schedule to decide in advance, since starting times depend on the uncertainty realization. However, finding a precomputed \emph{baseline schedule} is often required in applications to prepare the implementation of the schedule. 
For project scheduling under precedence constraints only, the computation of a baseline schedule was tackled in \citep{HerroelenStablePreschedule}. The baseline schedule can be optimized for a stability criterion, so that if the schedule must change after uncertainty realization, decisions are not changed too much w.r.t. baseline decisions. In this vein the criterion of \emph{anchored jobs} was introduced in \citep{CPMEJOR} and \citep{AnchRobPSPHal}. Anchored jobs are defined as jobs whose starting times are guaranteed w.r.t. an uncertainty set. The anchor-robust approach is to look for a baseline schedule with bounded makespan and a maximum number of anchored jobs.
\medbreak

\emph{Contributions.}
This article presents a new framework and solution methods in robust optimization for the RCPSP.
Anchored solutions are defined as a new concept in two-stage robust optimization for the RCPSP.
The Anchor-Robust RCPSP is defined, to find a baseline schedule with bounded makespan, sequencing decisions, and a maximum number of anchored jobs.
The connection with Adjustable-Robust RCPSP is pointed out. We show that these two problems can benefit from each other, in order to find both a worst-case makespan and a baseline schedule.
We extend algorithmic tools from the literature and obtain a graph model and compact MIP reformulations for adjustable-robustness and anchor-robustness in a unified way.
Heuristic algorithms are also proposed based on the same graph model. We investigate numerical performance of the proposed compact reformulation for the Adjustable-Robust RCPSP on instances based on the PSPLib. We point out that the overhead computational effort for solving the Adjustable-Robust RCPSP, in comparison with the deterministic RCPSP, is low. We follow on by investigating the efficiency of the proposed heuristic approaches to solve larger instances. We then provide numerical results for the Anchor-Robust RCPSP, for both MIP reformulation and heuristics. We highlight the good properties of anchor-robust solutions for finding a trade-off between makespan and guarantee of starting times.

\medbreak

\emph{Outline.}
Section~\ref{sec:prelim} is devoted to preliminaries.
In Section~\ref{sec:models}, anchored solutions are defined, together with the Anchor-Robust RCPSP.
In Section~\ref{sec:graphMIP}, graph models are presented. Compact reformulations and heuristics for the Adjustable-Robust RCPSP and for the Anchor-Robust RCPSP are proposed.
Sections~\ref{sec:num:exactAdj} to \ref{sec:num:heurAnch} are devoted to computational results.
In Section~\ref{sec:num:exactAdj}, numerical findings for the MIP for Adjustable-Robust RCPSP are reported.
In Section~\ref{sec:num:heurAdj}, heuristics for the Adjustable-Robust RCPSP are numerically assessed.
In Section~\ref{sec:num:exactAnch}, numerical findings for the MIP for Anchor-Robust RCPSP are reported.
In Section~\ref{sec:num:heurAnch}, numerical results of a heuristic for the Anchor-Robust RCPSP are reported.

\section{Preliminaries}
\label{sec:prelim}

\emph{Deterministic RCPSP.}
Let us define formally the Resource-Constrained Project Scheduling Problem (RCPSP).
The set of jobs is $\Jobs$, and $\AllJobs = \Jobs \cup \{s,t\}$ with $s$ and $t$ dummy jobs representing the beginning and the end of the project. The precedence graph $(\AllJobs, \Arcset) $ is directed and acyclic, with source $s$ and sink $t$.
Jobs have processing times $p \in \RealJp$. Let $(\AllJobs, \Arcset, p)$ denote the arc-weighted version where arc $(i,j) \in \Arcset$ has length $p_i$, and $p_s = 0$.
A schedule of precedence graph $(\AllJobs, \Arcset, p)$ is a vector $x \in \Real^{\AllJobs}_+$ satisfying precedence constraints $x_j - x_i \geq p_i$ for every arc $(i,j) \in \Arcset$.
The problem of finding a schedule of $(\AllJobs, \Arcset, p)$ of minimum \emph{makespan} $x_t$
, is referred to as PERT scheduling or project scheduling under (only) precedence constraints. This problem is solvable in polynomial time, the optimal value being the length of a longest \stpath\ path in the precedence graph.

Let $\Ress$ denote a set of renewable resources. Each job $i \in \Jobs$ has a resource requirement $\rik \geq 0$ of resource $k \in \Ress$. Let $\Rk \geq 0$ be the resource availability of resource $k$.
Given schedule $x \in \Real^{\AllJobs}$ and date $d \geq 0$, let $A_x(d) = \{ i \in J\colon\ x_i \leq d < x_i + p_i \}$ be the set of jobs under execution at date $d$ in schedule $x$.
Schedule $x$ satisfies resource constraints associated with requirements $(\rik)_{i \in J, k \in \Ress}$ and availabilities $(\Rk)_{k \in \Ress}$ if $\sum_{i \in A_x(x_j)} \rik \leq \Rk$ for every $k \in \Ress$ for every $j \in \AllJobs$. That is, at every date $x_j$ where a job $j$ starts, the total quantity of resource used by jobs being executed is at most the resource availability.
Given instance $\Inst = ((\AllJobs, \Arcset, p), (\rik)_{i \in J, k \in \Ress}$, $(\Rk)_{k \in \Ress})$,
the RCPSP is to find a schedule $x \in \Real^{\AllJobs}_+$ satisfying
precedence constraints of $(\AllJobs, \Arcset, p)$ and resource constraints,
so that the makespan $x_t$ is minimized.

\medbreak
\emph{Uncertainty sets.}
Consider processing times uncertainty. Processing times have nominal values $p \in \RealJp$ and their real value may be some $p+\delta$, where deviation $\delta$ is assumed to be lying in an uncertainty set $\Delta \subseteq \Real^J_+$.
In the present work we consider budgeted uncertainty, which is broadly used in robust optimization since its introduction in \citep{PriceBS}. The budgeted uncertainty set associated with vector $\dhat \in \RealJp$ and integer $\Gamma$ is defined by $\Delta = \{ \delta= (\dhat_i u_i)_{i \in \Jobs}\colon\ u \in \BinJobs,\ \sum_{i \in \Jobs} u_i \leq \Gamma \}$. 
Vector $\dhat$ corresponds to worst-case deviations of each processing time. Parameter $\Gamma$ is the \emph{uncertainty budget}, representing that at most $\Gamma$ processing times may deviate from their nominal values at a time.
If $\Gamma=0$, set $\Delta$ is reduced to the singleton $\{0\}$, and robust approaches reduce to the deterministic RCPSP.
Note that $\Delta$ is defined as a discrete set, and note a polytope; a result from \citep{BGGNAdjustable} applicable to all robust approaches we consider, is that $\Delta$ can be convexified without changing the solutions.
If $\Gamma= |\Jobs|$, the convex hull of set $\Delta$ is the \emph{box uncertainty set} $\Pi_{i \in \Jobs} [0, \dhat_i]$, also known as interval uncertainty set.

\medbreak
\emph{Static-robust approach.}
Let $\Inst =((\AllJobs, \Arcset,p), (\rik), (\Rk))$ denote the nominal RCPSP instance.
Let also $\InstDelta = ((\AllJobs, \Arcset,\pplusd), (\rik), (\Rk))$ be the instance where processing times are affected by deviation $\dinD$.
Following the work of \citep{Soyster} a first attempt is to define the Static-Robust RCPSP:\\
Given nominal instance $\Inst$ and set $\Delta$, the problem is to
find a schedule $x^+$ that is feasible for $\InstDelta$ for every $\dinD$, so as to minimize makespan $x^+_t$.\\
Schedule $x^+$ is said to be a static-robust solution.
In general, the static-robust approach is known to be overly conservative, in the sense that the cost of a static-robust solution is much more than the cost of a solution to the deterministic problem. 
This increase in cost is called the \emph{price of robustness} \citep{PriceBS}.
Budgeted uncertainty sets were introduced with the purpose of reducing the conservatism of the robust problem: the smaller $\Gamma$ the smaller the price of robustness.
However, we point out that for project scheduling, the uncertainty budget has no impact on the price of robustness.

\begin{prop}
\label{prop:static}
For every budget $\Gamma \in \{1, \dots, |J|\}$, solutions of the Static-Robust RCPSP for budgeted uncertainty set $\Delta = \{ (\dhat_i u_i)_{i \in \Jobs}\colon\ u \in \BinJobs,\ \sum_{i \in \Jobs} u_i \leq \Gamma \}$ are the same as solutions of the Static-Robust RCPSP for box uncertainty set $\Pi_{i \in J} [0, \dhat_i]$.
\end{prop}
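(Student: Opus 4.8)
The plan is to prove that the Static-Robust RCPSP for $\Delta$ and the Static-Robust RCPSP for the box $\Pi_{i\in J}[0,\dhat_i]$ have exactly the same set of feasible schedules; since the two problems minimise the same objective $x^+_t$, this yields at once the equality of their sets of (optimal) solutions. One inclusion costs nothing: every $\delta\in\Delta$ satisfies $0\le \delta \le \dhat$ componentwise, hence $\delta$ lies in the box, so a schedule feasible for $\InstDelta$ for all $\delta$ in the box is a fortiori feasible for all $\delta\in\Delta$. The entire content is the reverse inclusion, and this is where $\Gamma\ge 1$ enters: I must show that a schedule $x^+$ feasible for $\InstDelta$ for every $\dinD$ is feasible for $\InstDelta$ for every $\delta$ in the box.

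So fix such an $x^+$ and an arbitrary $\delta^{\circ}$ in the box; I want $x^+$ feasible for $\mathcal{I}_{\delta^{\circ}}$. Consider the precedence constraints first. For an arc $(i,j)\in\Arcset$, let $\delta$ be the deviation with $\delta_i=\dhat_i$ and $\delta_{i'}=0$ for $i'\ne i$; this $\delta$ uses a single deviating job, so it belongs to $\Delta$ precisely because $\Gamma\ge 1$. Feasibility of $x^+$ for $\InstDelta$ then gives $x^+_j-x^+_i\ge p_i+\dhat_i\ge p_i+\delta^{\circ}_i$. Ranging over all arcs, $x^+$ satisfies every precedence constraint of $\mathcal{I}_{\delta^{\circ}}$; equivalently, $x^+$ is already a schedule of the worst-case precedence graph $(\AllJobs,\Arcset,p+\dhat)$, and only budget-one scenarios were needed for this.

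It remains to handle the resource constraints of $\mathcal{I}_{\delta^{\circ}}$, and this is the step to treat with care. The resource constraints involve only the fixed requirements $(\rik)$ and availabilities $(\Rk)$ together with the jobs in execution at the start dates of the baseline schedule $x^+$ — data that does not depend on the chosen deviation — so the resource constraints of $\mathcal{I}_{\delta^{\circ}}$ are the same as those of the nominal instance $\Inst=\mathcal{I}_{0}$; and $0\in\Delta$, so $x^+$ satisfies them. Hence $x^+$ is feasible for $\mathcal{I}_{\delta^{\circ}}$, for every $\delta^{\circ}$ in the box, which completes the reverse inclusion and the proof. I expect the only delicate point to be this resource step: one must be careful that the resource feasibility of a fixed baseline schedule is governed by the resource-allocation data rather than by the realised processing times, so that it is not affected by enlarging $\Gamma$; granting that, everything else reduces to the one-line observation that a single-job deviation already forces the worst-case precedence constraint, which is exactly why the uncertainty budget is irrelevant for project scheduling.
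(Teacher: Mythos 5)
Your overall route is the same as the paper's: the inclusion $\Delta \subseteq \Pi_{i \in J}[0,\dhat_i]$ gives one direction for free, and for the converse the single-job deviation $\delta_i = \dhat_i$ (available as soon as $\Gamma \geq 1$) forces $x^+_j - x^+_i \geq p_i + \dhat_i$ on every arc, so the schedule already respects the worst-case precedence graph $(\AllJobs, \Arcset, p+\dhat)$. Up to that point your argument matches the paper's proof.

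The gap is precisely in the step you flag as delicate. Your claim that the resource constraints of $\mathcal{I}_{\delta^{\circ}}$ coincide with those of the nominal instance is false under the paper's own definition: the set of jobs under execution at a start date is $A_{x}(x_j) = \{ i \colon x_i \leq x_j < x_i + p_i + \delta_i \}$, which does depend on the deviation, since enlarging processing times stretches execution intervals and can bring new jobs into overlap at the fixed date $x_j$. So resource feasibility of a fixed baseline schedule is \emph{not} governed only by the allocation data $(\rik), (\Rk)$; it can degrade as durations grow, and nominal feasibility ($0 \in \Delta$) does not give feasibility for a box deviation. Worse, a violation at $x_j$ under the box may require several jobs to be extended simultaneously, which is exactly what the budget restricts, so it is not even obvious that feasibility for all $\dinD$ rules it out; your one-line reduction to the nominal instance cannot stand as written. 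The paper avoids this issue by treating all $\delta$-dependent constraints as precedence-type constraints with worst-case lengths $p+\dhat$ (consistent with its solution framework, where resource feasibility is carried by a sequencing decision $\sigma \in \Sigmas$ that is independent of processing times, so every constraint involving $\delta$ has the form $x_j - x_i \geq p_i + \delta_i$ over $\ArcsetSigma$ and the same single-deviation argument applies to all of them). To repair your proof you should either adopt that reading explicitly, or supply a genuine argument for the $A_x(d)$-based resource constraints; asserting their independence from the realized durations is not one.
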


\begin{proof}
Let $\Gamma < |J|$. Clearly a static-robust solution for box uncertainty is also static-robust for $\Delta$ since $\Delta \subseteq \Pi_{i \in J}[0, \dhat_i]$. Conversely, assume that $x$ is feasible for uncertainty set $\Delta$. 
Let $(i,j) \in \Arcset$. It holds that $x_j - x_i \geq p_i + \delta_i$ for every $\delta \in \Delta$. In particular for every budget $\Gamma \geq 1$, the uncertainty realization where $\delta_i = \dhat_i$ is in $\Delta$. Hence $x_j - x_i \geq p_i + \dhat_i$, and $x$ is a schedule of $(\AllJobs, \Arcset, p+\dhat)$. Since $\dhat$ is an upper-bound on the box uncertainty set, schedule $x$ is a static-robust solution for box uncertainty.
\end{proof}
This observation motivates the study of other robust approaches, where the schedule could be adapted to uncertainty realization.

\section{Anchor-Robust approach for the RCPSP}
\label{sec:models}

In this section we propose the concept of anchor-robustness for the RCPSP.\\
We first recall the flow formulation for the RCPSP, which serves as a basis for the anchor-robust approach.
Anchored sets are then formally defined.
The Anchor-Robust RCPSP is defined.
Finally we highlight the connection with the Adjustable-Robust RCPSP from the literature.

\subsection{Flow formulation for the RCPSP}

Let us first present the flow formulation for the RCPSP from the literature \citep{Artigues2003}. Let $\Pairs = \{ (i,j), i,j \in \AllJobs, i \not= j \}$. With a schedule $x$ satisfying resource constraints, one can associate a so-called \emph{resource flow} $f$. The resource flow is a collection $(f^k)_{k \in \Ress}$ of \stpath flows in the graph $(\AllJobs, \Pairs)$. Flow $f^k$ has value $\Rk$, and the quantity flowing through every job $i \in J$ equals the resource requirement $\rik$. Intuitively, the flow $f^k$ represents how units of resource $k$ are passed from a job to another.
If $\fijk > 0$ for some $k \in \Ress$, i.e., if job $j$ uses some units of resource $k$ after completion of job $i$, then an extra precedence constraint $x_j - x_i \geq p_i$ must be satisfied by schedule $x$. To represent such extra precedence constraints, consider vector $\sigma \in \BinPairs$.
Let $\ArcsetSigma = \{ (i,j) \in \Pairs\colon\ \sigmaij = 1\}$.
Vector $\sigma \in \BinPairs$ is a \emph{sequencing decision} (also called sufficient selection in the literature) associated with resource flow $f$ if:
(i) $\sigma$ contains all original precedence constraints, i.e., $\Arcset \subseteq \ArcsetSigma$;
(ii) $\sigma$ contains all arcs corresponding to non-zero flow values, i.e., $\{ (i,j) \in \Pairs\colon\ \exists\ k \in \Ress,\ \fijk > 0\} \subseteq \ArcsetSigma$.
Using the resource flow, the following formulation for the RCPSP was proposed in \citep{Artigues2003}:
\begin{center}
\begin{tabular}{l l l l}
$\min$ & $x_t$ \\
s.t. & $x_j - x_i \geq p_i - \Mbar(1-\sigma_{ij})$ & $\forall i,j \in \Pairs$& (1)\\
& $\sigmaij = 1$ & $\forall (i,j) \in \Arcset$ & (2)\\
& $\fijk \leq \min\{\rik,\rjk\} \sigmaij$ & $\forall i,j \in \Pairs$, $k \in \Ress$ & (3)\\
& $\sum_{j: (i,j) \in \Pairs} \fijk = \tilde{r}_{ik}$ & $\forall k \in \Ress$, $\forall i \in \Jobs \cup \{s\}$ & (4)\\
& $\sum_{j: (j,i) \in \Pairs} \fjik = \tilde{r}_{ik}$ & $\forall k \in \Ress$, $\forall i \in \Jobs \cup \{t\}$ & (5)\\
& $\fijk \geq 0$ & $\forall i,j \in \Pairs, k \in \Ress$ &(6)\\
& $\sigma_{ij} \in \{0,1\}$ & $\forall i,j \in \Pairs$ & (7)\\
& $x_j \geq 0 $ & $\forall j \in \AllJobs$ & (8)\\
\end{tabular}
\end{center}
where $\tilde{r}_{ik} = \Rk$ if $i = s$ or $i= t$, and $\tilde{r}_{ik} = r_{ik}$ otherwise, and $\Mbar$ is an upper bound on the optimal value.
Constraint (1) imposes that vector $x \in \Real^{\AllJobs}_+$ is a schedule of $(\AllJobs, \ArcsetSigma,p)$, $\Mbar$ being a bigM value. Remark that for $\sigma \in \BinPairs$ and non-zero processing times, the existence of a schedule $x$ of $(\AllJobs, \ArcsetSigma, p)$ implies the absence of circuits in $\ArcsetSigma$.
Constraint (2) imposes that $\Arcset \subseteq \ArcsetSigma$. Constraints (4)--(5) define every $(\fijk)_{i,j \in \Pairs}$ to be a flow in the complete graph $(\AllJobs, \Pairs)$ and the quantity of resource $k$ going out of $s$ (resp. through $i \in \Jobs$) to be equal to $\Rk$ (resp. $r_{ik}$). Constraint (3) imposes that if a non-zero flow goes from $i$ to $j$ then there must be a precedence relation from $i$ to $j$ in $\sigma$.

Let $\Sigmas$ denote the set of all sequencing decisions, i.e., $$\Sigmas = \{\sigma \in \BinPairs\colon\ \exists f \text{ such that } (\sigma, f) \text{ satisfies }(2)\text{--}(7)\}.$$
Importantly, flow $f$ does not interfere directly with schedule $x$ but only implies that $\sigma$ as an element of $\Sigmas$. Hence a solution of the RCPSP can be represented as a pair $(x, \sigma)$ with $\sigma \in \Sigmas$ and $x$ a schedule of $(\AllJobs, \ArcsetSigma, p)$.

The flow formulation is known to have a poor linear relaxation bound, but it is compact and its size is independent from the time horizon \citep{Kone2013}. In the sequel, it will be shown that the flow formulation can be used as the underlying structure of robust formulations.

\subsection{Anchored sets}

Let us now propose the concept of anchor-robustness for the RCPSP, based on the definition of so-called anchored sets.
Anchor-robustness fits within the general framework of 2-stage robustness (or adjustable robustness) introduced in \citep{BGGNAdjustable}. In this framework,
first-stage decisions are made in a first stage before uncertainty is known, second-stage decisions are made after the uncertainty realization is observed in a second (or recourse) stage.
As second-stage decisions depend on the uncertainty realization, they are also called adjustable. By contrast, there are no second-stage decisions in static robustness. The existence of adjustable decisions adds flexibility to the robust approach, and the price of robustness is decreased \citep{BGGNAdjustable}.

We consider the following approach for 2-stage decisions in project scheduling. A baseline solution $(x, \sigma)$ is chosen in first stage, feasible for the instance $\Inst$. Then, the real instance may be $\InstDelta$: the schedule can be changed in second stage into a new schedule $\xd$ of $(\AllJobs, \ArcsetSigma, \pplusd)$, with sequencing decision $\sigma$ unchanged.
The decision maker may also want to guarantee starting times, so that the starting times of some jobs are the same in baseline schedule $x$ and second-stage schedule $\xd$.
To formalize this idea, let us introduce anchored jobs, following the line opened in \citep{AnchRobPSPHal} for PERT scheduling.

\begin{definition}
\label{def:anchoredSet}
Let $(z, \sigma)$ be a solution of the RCPSP instance $\Inst$. Let $\Hbar \subseteq \AllJobs$.\\
The set $\Hbar$ is \emph{anchored} w.r.t. schedule $z$ and sequencing decision $\sigma$ if for every $\dinD$, there exists a schedule $z^{\delta}$ of $(\AllJobs, \ArcsetSigma, \pplusd)$ such that $z_i = z^{\delta}_i$ for every $i \in \Hbar$.
\end{definition}

We say equivalently that $\Hbar$ is an anchored set or a subset of anchored jobs. An anchored set $\Hbar$ corresponds to jobs whose starting times are guaranteed in the baseline schedule $z$: indeed for every realization in the uncertainty set, it is possible to repair the baseline schedule $z$ into a feasible schedule $z^{\delta}$ of the new instance $(\AllJobs, \ArcsetSigma, \pplusd)$ without changing the starting times of anchored jobs.
\medbreak
In the sequel, we will consider \emph{anchored solutions} given as triplets $(z, \sigma, \Hbar)$, formed with a solution $(z,\sigma)$ of $\Inst$ with baseline schedule $z$ and sequencing decision $\sigma$, and subset of jobs $\Hbar$ anchored w.r.t. $z$ and $\sigma$.
\medbreak
Static-Robust solutions correspond exactly to solutions with anchored set $\Hbar = \AllJobs$. Indeed if $\AllJobs$ is anchored w.r.t. $z$ and $\sigma$, then $z$ is a schedule of $\PrecGraphAsigmaPD$ for every $\dinD$. Then $(z,\sigma)$ is a solution of $\InstDelta$ for every $\dinD$: it is a static-robust solution.

\subsection{The Anchor-Robust RCPSP}

Let us now define the optimization problem Anchor-Robust RCPSP:\\
Given RCPSP instance $\Inst = ((\AllJobs, \Arcset, p), (\rik)_{i \in J, k \in \Ress}, (\Rk)_{k \in \Ress})$ and a deadline $M \geq 0$, find a sequencing decision $\sigma \in \Sigmas$, a baseline schedule $z$ of $(\AllJobs, \ArcsetSigma, p)$ with makespan at most $M$, and a subset of jobs $H \subseteq J$, such that $H \cupt$ is anchored w.r.t. $z$ and $\sigma$, and the number of anchored jobs $|H|$ is maximized.
\medbreak
The Anchor-Robust RCPSP can be seen as a robust 2-stage problem, and written as the following mathematical program:
\begin{center}
\begin{tabular}{l l l}
    $\max$        & $\min$ & $\max \quad |H|$ \\
    $\sigma \in \Sigmas$ & $\dinD$ &  $z^{\delta}$ schedule of $\PrecGraphAsigmaPD$\\
    $z$ schedule of $\PrecGraphAsigmaP$ & & $\zd_i = z_i$ $\forall i \in H$ \\
    $z_t \leq M$ && $\zd_t = z_t$\\
    $H \subseteq J$ \\
\end{tabular}
\end{center}
The inner min/max problem has finite value $|H|$ if the set $H \cupt$ is anchored w.r.t. $z$ and $\sigma$; otherwise it has infinite value.
The Anchor-Robust RCPSP is thus to find an anchored solution $(z, \sigma, H\cupt)$ where the makespan of $z$ is bounded by deadline $M$, and the size of $H$ is maximized.
Note that the final job $t$ is forced to be in the anchored set $H \cupt$. This offers a guarantee on the worst-case makespan, in the sense that $\zd_t \leq M$ for every $\dinD$. \medbreak
The connection with static-robustness is as follows. There exists a solution with anchored set $\Hbar = \AllJobs$ if and only if $M \geq M_{stat}$, where $M_{stat}$ is the optimal value of the Static-Robust RCPSP. If $M < M_{stat}$ a solution of the Anchor-Robust RCPSP only has a subset of the jobs that are anchored, but of maximum size.

\medbreak
For project scheduling with precedence constraints only, the Anchor-Robust Project Scheduling Problem was introduced in \citep{AnchRobPSPHal}. In the Anchor-Robust RCPSP, the schedule in second stage is subject to precedence constraints only since the sequencing decision $\sigma \in \Sigmas$ is fixed. Thus tools from \citep{AnchRobPSPHal} can be used.

\medbreak
The Anchor-Robust RCPSP provides a baseline solution where decisions are guaranteed in two ways. First, some starting times, i.e., starting times of jobs in $H$, are not modified in second stage by definition of anchored sets. Second, the sequencing decision $\sigma$ is a first-stage decision and it is not to be modified in second stage.
We emphasize that fixing sequencing decision in advance can be of practical interest. In applications, the sequencing decision may correspond, e.g., to the sequence of operations performed by workers with specific skills or equipment. They would prefer not to revise the order in which operations are performed; by contrast, it may be acceptable to adapt some starting times.
From a computational point of view, sequencing decisions in the second stage would lead to very difficult optimization problems. Namely, if the sequencing decision is in second stage, even deciding if the starting time of a job can be guaranteed, is NP-hard.
More details on the complexity of rescheduling problems was discussed in \citep{ISCOAnchResched}.

\subsection{Connection with the Adjustable-Robust RCPSP}

An adjustable-robust approach for the RCPSP was proposed in \citep{Bruni2016}. First stage decision is the sequencing decision $\sigma \in \Sigmas$; schedule $x$ is decided in second stage. The Adjustable-Robust RCPSP writes as the following mathematical program:
\begin{center}
\begin{tabular}{l l  r l}
$\min$  & $\max$ & $\min$ & $\xd_t$\\ 
$\sigma \in \Sigmas$ & $\dinD$ & s.t.& $\xd$ schedule of $\PrecGraphAsigmaPD$ \\
\end{tabular}
\end{center}
Let $\QGamma(\sigma)$ denote the inner max-min problem, i.e., the worst-case makespan for sequencing decision $\sigma \in \Sigmas$. The Adjustable-Robust RCPSP is then to minimize $\QGamma(\sigma)$ for $\sigma \in \Sigmas$.
\medbreak
Contrary to the anchor-robust approach, the Adjustable-Robust RCPSP does not include the computation of a baseline schedule. However the Adjustable-Robust RCPSP is related to anchored sets by the following observation. 
\begin{observation}
\label{obs}
Let $\sigma \in \Sigmas$. The worst-case makespan $\QGamma(\sigma)$ is equal to the minimum makespan of $z$, where $(z,\sigma,\{t\})$ is an anchored solution.
\end{observation}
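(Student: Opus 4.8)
The claim asserts an equality, for a fixed $\sigma \in \Sigmas$, between the worst-case makespan $\QGamma(\sigma)$ and the quantity $m(\sigma) := \min\{\, z_t \colon\ (z,\sigma,\{t\})\text{ is an anchored solution}\,\}$. The plan is to prove the two inequalities $\QGamma(\sigma) \leq m(\sigma)$ and $m(\sigma) \leq \QGamma(\sigma)$ separately. The single technical fact that makes both directions work is that the dummy sink $t$ has no successor in $(\AllJobs, \ArcsetSigma)$: hence if $y$ is any schedule of $(\AllJobs, \ArcsetSigma, q)$ for some processing-time vector $q$ and $L \geq y_t$, then the vector obtained from $y$ by resetting its $t$-coordinate to $L$ is again a schedule of $(\AllJobs, \ArcsetSigma, q)$, since raising $y_t$ only relaxes the precedence constraints $y_t - y_i \geq q_i$ entering $t$.

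For $\QGamma(\sigma) \leq m(\sigma)$, I would start from an arbitrary anchored solution $(z,\sigma,\{t\})$. By Definition~\ref{def:anchoredSet}, for every $\dinD$ there is a schedule $\zd$ of $\PrecGraphAsigmaPD$ with $\zd_t = z_t$; in particular, the minimum makespan of a schedule of $\PrecGraphAsigmaPD$ is at most $z_t$. Taking the maximum over $\dinD$ yields $\QGamma(\sigma) \leq z_t$, and since this holds for every anchored solution $(z,\sigma,\{t\})$ we obtain $\QGamma(\sigma) \leq m(\sigma)$.

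For the reverse inequality, I would exhibit a single anchored solution of makespan exactly $\QGamma(\sigma)$. Since every $\dinD$ satisfies $\delta \geq 0$, any schedule of $\PrecGraphAsigmaPD$ is also a schedule of $\PrecGraphAsigmaP$, so the minimum makespan of $\PrecGraphAsigmaP$ is at most that of $\PrecGraphAsigmaPD$ for each $\dinD$, and hence at most $\QGamma(\sigma)$. Let $z$ be a minimum-makespan schedule of $\PrecGraphAsigmaP$ with its $t$-coordinate raised to $\QGamma(\sigma)$ (valid by the sink remark), so that $z_t = \QGamma(\sigma)$ and $(z,\sigma)$ is a solution of $\Inst$. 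To see that $\{t\}$ is anchored w.r.t.\ $z$ and $\sigma$, fix $\dinD$ and let $y^{\delta}$ be a minimum-makespan schedule of $\PrecGraphAsigmaPD$; by definition of $\QGamma(\sigma)$ we have $y^{\delta}_t \leq \QGamma(\sigma)$, so raising its $t$-coordinate to $\QGamma(\sigma)$ produces a schedule $\zd$ of $\PrecGraphAsigmaPD$ with $\zd_t = \QGamma(\sigma) = z_t$. Hence $(z,\sigma,\{t\})$ is an anchored solution and $m(\sigma) \leq z_t = \QGamma(\sigma)$.

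Combining the two inequalities gives $\QGamma(\sigma) = m(\sigma)$, which is the statement. I do not expect any genuine obstacle; the only point requiring brief care is the feasibility of delaying the sink job $t$, which underpins both directions, together with observing that ``$(z,\sigma,\{t\})$ is an anchored solution'' is exactly the object quantified over in the statement, namely a solution $(z,\sigma)$ of $\Inst$ together with an anchored set reduced to $\{t\}$.
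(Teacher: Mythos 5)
Your proof is correct and follows essentially the same route as the paper, which simply reads off the equivalence from the definitions of $\QGamma(\sigma)$ and of anchored solutions. The only difference is that you make explicit the step the paper leaves implicit — that a schedule with makespan at most $M$ can be turned into one whose $t$-coordinate equals $M$ exactly, since delaying the sink preserves feasibility — which is a welcome clarification but not a different argument.
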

\begin{proof}
By definition $\QGamma(\sigma)$ is the minimum value $M$ such that for every $\dinD$, there exists a schedule $\xd$ of $\PrecGraphAsigmaPD$ with makespan at most $M$. 
Equivalently, it is the minimum $M$ such that $z$ is a schedule of $\PrecGraphAsigmaP$ with $z_t=M$, and $\sinkt$ is anchored w.r.t. $z$ and $\sigma$, by definition of anchored solutions.
\end{proof}
\medbreak
A consequence of Observation~\ref{obs} is that there exists a solution of the Anchor-Robust RCPSP if and only if $M \geq M_{adj}$, where $M_{adj}$ is the optimal value of the Adjustable-Robust RCPSP. The singleton $\sinkt$ can be anchored if and only if $M \geq M_{adj}$.
Note that even with deadline $M = M_{adj}$, there exists solutions with anchored set $H \cupt$, $H \not= \varnothing$, as illustrated in Section~\ref{sec:num:exactAnch}. That is, it is possible to guarantee a non-trivial subset of starting times without increasing the worst-case makespan $M_{adj}$.
\medbreak
The Adjustable-Robust RCPSP and the Anchor-Robust RCPSP can benefit from each other in the following way. First the Adjustable-Robust RCPSP can be solved to determine a robust worst-case makespan $M_{adj}$. Then the Anchor-Robust RCPSP can be solved with deadline $M_{adj}$ to determine schedule $z^*$ satisfying this deadline, sequencing decision $\sigma^*$, along with an anchored set $H^*$.\\

\medbreak
Let us review the approaches proposed in the literature to solve the Adjustable-Robust RCPSP.
In \citep{Bruni2016} the problem was introduced and a Benders decomposition was proposed. The computation of $\QGamma(\sigma)$ for fixed $\sigma$ appears as subproblem. The authors then focused on budgeted uncertainty and proved that $\QGamma(\sigma)$ can be computed in polynomial time by dynamic programming in that case. Note that it is the same problem solved in \citep{Minoux2007}. The authors proposed enhancements to the Benders decomposition algorithm, evaluated on instances built upon the PSPLib.
In \citep{Bruni2018} the same authors investigated two new methods: a primal and a dual one. The dual method has the same subproblem as in \citep{Bruni2016} but other cuts associated with dual information are added on the fly. The primal method is a column-and-constraint generation scheme inspired from \citep{ZZ}. Computational comparison of the two decomposition approaches of \citep{Bruni2018} and the one of \citep{Bruni2016} is reported.
In \citep{BoldGoerigk2020} a compact reformulation for the Adjustable-Robust RCPSP is proposed, independently from the present work.

\section{Graph model and compact MIP reformulations}
\label{sec:graphMIP}

In this section, a graph model is proposed to characterize anchored sets and solutions of the Anchor-Robust RCPSP. Compact MIP reformulations are deduced for the Anchor-Robust RCPSP and for the Adjustable-Robust RCPSP. Heuristics based on the graph model are also proposed.

\subsection{Layered graph}
\label{sec:layeredGraph}

Let us propose a graph model based on the \emph{layered graph} previously introduced in \citep{AnchRobPSPHal} for PERT scheduling.
Let $\Delta$ be the budgeted uncertainty set defined by deviation $\dhat \in \RealJp$ and budget $\Gamma$, and let nominal processing times be $p \in \RealJp$.
\medbreak
Let $(\AllJobs, \Arcset, p)$ be a precedence graph.
For $\Hbar \subseteq \AllJobs$, the \emph{layered graph} $\Glay(\Arcset, \Hbar)$ is defined as follows. It contains $\Gamma+1$ \emph{layers}, indexed from $0$ to $\Gamma$. Each layer $\gamma$ contains a copy of the set of jobs, denoted by $\gam{i}$, $i \in \AllJobs$. The arc-set of the layered graph contains three types of arcs. \emph{Horizontal arcs} are arcs $(\gam{i}, \gam{j})$ for every $\gamma$ and $(i,j) \in \Arcset$, with arc-weights $p_i$.
\emph{Transversal arcs} are arcs $(\gamp{i}, \gam{j})$ for every $\gamma < \Gamma$ and $(i,j) \in \Arcset$, with arc-weights $p_i + \dhat_i$. \emph{Vertical arcs} are arcs $(\gam{j}, \Gam{j})$ for every $j \in \Hbar$, $\gamma < \Gamma$, with arc-weights $0$.
\medbreak
Note that the layered graph has all information from $\Delta$, since the number of layers depends on the uncertainty budget $\Gamma$, and deviations $\dhat$ appear on arc-weights of transversal arcs.
Note also that if the graph $(\AllJobs, \Arcset)$ is acyclic, then $\Glay(\Arcset,\Hbar)$ is acyclic.
In \citep{AnchRobPSPHal} the set $\Hbar$ was defined as a subset of $J$ and not $\AllJobs$, but all results extend to the case of a set $\Hbar$ containing $s$ or $t$. Let us recall the main result we use from this previous work.

\begin{theorem}[\cite{AnchRobPSPHal}]
\label{thm:OR}
Let $z$ be a schedule of $(\AllJobs, \Arcset, p)$ and $\Hbar \subseteq \AllJobs$. The following assertions are equivalent:\\
(i) For every $\dinD$ there exists $\zd$ a schedule of $(\AllJobs, \Arcset, \pplusd)$ such that $\zd_i = z_i$ for every $i \in \Hbar$;\\
(ii) There exists $x$ a schedule of $\Glay(\Arcset, \Hbar)$ such that $\Gam{x}_i = z_i$ for every $i \in \AllJobs$.
\end{theorem}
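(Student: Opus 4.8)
\emph{Proof plan.} My plan is to pass through a longest‑path reformulation of anchoring and then show that the layered graph encodes it. First I would reduce assertion~(i) to a combinatorial condition: for a fixed $\dinD$, the recourse schedule in~(i) exists if and only if the difference‑constraint system consisting of $z^{\delta}_j-z^{\delta}_i\ge p_i+\delta_i$ for $(i,j)\in\Arcset$ and $z^{\delta}_i=z_i$ for $i\in\Hbar$ is consistent, i.e.\ has no positive‑weight circuit; unwinding the circuits through the pinned variables, and treating $s$ as anchored with $z_s=0$ to absorb the sign constraints (using that every job is reachable from $s$), this is equivalent to
\[
(\ast)\qquad z_b-z_a\ \ge\ \max_{\delta\in\Delta}\,w_{p+\delta}(R)\qquad\text{for all }a,b\in\Hbar\text{ and all paths }R\text{ from }a\text{ to }b\text{ in }(\AllJobs,\Arcset),
\]
where $w_{p+\delta}(R)$ is the length of $R$ under processing times $p+\delta$. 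Hence $(\text{i})\Leftrightarrow(\ast)$, and it remains to prove $(\ast)\Leftrightarrow(\text{ii})$; here $\Glay(\Arcset,\Hbar)$ is the natural device, since its layers record how much of the $\Gamma$‑budget of deviations has been used, a chain of $k$ transversal arcs realizes $k$ deviations (substituting $p_i+\dhat_i$ for $p_i$), horizontal arcs keep the count fixed, and a vertical arc at a job of $\Hbar$, being free, pins that job's copies across layers to its baseline start whatever budget has been spent upstream.

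\medbreak
For $(\text{ii})\Rightarrow(\ast)$: let $x$ be a schedule of $\Glay(\Arcset,\Hbar)$ with $x^{(\Gamma)}_i=z_i$ for all $i$, and fix $a,b\in\Hbar$, a path $R$ from $a$ to $b$, and $\dinD$ with support $D$, so $|D|\le\Gamma$. Lift $R$ to a directed path in $\Glay(\Arcset,\Hbar)$: start at the layer‑$\Gamma$ copy of $a$; traverse $R$ job by job, moving down one layer along a transversal arc at each job of $D$ and staying within the current layer along a horizontal arc otherwise, reaching the layer‑$(\Gamma-|D|)$ copy of $b$; then take the vertical arc back to the layer‑$\Gamma$ copy of $b$ (available since $b\in\Hbar$; unnecessary if $D=\varnothing$). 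This path has total weight exactly $w_{p+\delta}(R)$, so the schedule inequalities of $x$ give $z_b-z_a\ge w_{p+\delta}(R)$; maximizing over $\dinD$ yields $(\ast)$.

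\medbreak
For $(\ast)\Rightarrow(\text{ii})$: set $x$ on layer $\Gamma$ equal to $z$, and extend to layers $0,\dots,\Gamma-1$ by longest‑path values in the acyclic graph $\Glay(\Arcset,\Hbar)$, with the layer‑$\Gamma$ copies and the copies of $s$ acting as sources. Every arc inequality into a copy below layer $\Gamma$ then holds automatically, and the horizontal arc inequalities into layer $\Gamma$ hold because $z$ is a schedule of $(\AllJobs,\Arcset,p)$; so one is left with the inequalities carried by the vertical arcs, $z_j\ge x^{(\gamma)}_j$ for $j\in\Hbar$ and $\gamma<\Gamma$. Reading off a longest path realizing $x^{(\gamma)}_j$ and cutting it at the last layer‑$\Gamma$ copy it meets (or at a copy of $s$, of value $0$) produces a start $a$, a path $R$ from $a$ to $j$, and a deviation $\dinD$ supported on at most $\Gamma$ interior jobs of $R$, with $x^{(\gamma)}_j=z_a+w_{p+\delta}(R)$; one then argues, using $(\ast)$ together with $z$ being a schedule of $(\AllJobs,\Arcset,p)$, that this is at most $z_j$. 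Hence all inequalities hold and $x$ is a schedule of $\Glay(\Arcset,\Hbar)$ with $x^{(\Gamma)}=z$.

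\medbreak
I expect the main obstacle to be the first step — pinning down exactly which circuits occur and checking that~(i) collapses to the clean family $(\ast)$ — together with the bookkeeping that makes the last two steps rigorous: matching layers to the number of spent deviations, keeping the transversal arcs oriented the right way, and, above all, identifying precisely which arc inequalities of $\Glay(\Arcset,\Hbar)$ are the non‑trivial ones (those forced at anchored jobs by the vertical arcs) and why $(\ast)$ tames exactly those. Once this correspondence is fixed, the path constructions in Steps~2 and~3 are routine. Throughout one uses the standing assumption that every job is reachable from $s$ in $(\AllJobs,\Arcset)$.
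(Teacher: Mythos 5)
Your Steps 1 and 2 are sound: (i) is indeed equivalent to your weak condition $(\ast)$ ranging over anchored endpoints (with the source folded in at value $0$), and lifting a path with deviation support $D$ into $\Glay(\Arcset,\Hbar)$, descending one layer per deviated job and returning through the vertical arc at the anchored endpoint, correctly proves (ii)$\Rightarrow(\ast)$. The genuine gap is in Step 3, and it is not just bookkeeping: since (ii) pins the layer-$\Gamma$ copies of \emph{all} jobs to $z$, the transversal arcs leaving the top copy of a \emph{non-anchored} job $a$ are active, and any schedule of $\Glay(\Arcset,\Hbar)$ whose top layer equals $z$ must satisfy $z_b-z_a\ \ge\ w_{p+\delta}(R)$ for \emph{every} job $a$, every $b\in\Hbar$, every $a$--$b$ path $R$ and every $\dinD$ (descend along $R$ using a transversal arc at each deviated job, then return via the vertical arc at $b$). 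This is strictly stronger than $(\ast)$: when you cut the longest path realizing $x^{(\gamma)}_j$ at the last layer-$\Gamma$ copy it meets, that copy may be a non-anchored job, and $(\ast)$ gives no control over $z_a+w_{p+\delta}(R)$. Concretely, take $\AllJobs=\{s,1,2,t\}$ with arcs $s\to 1\to 2\to t$, $p_s=0$, $p_1=p_2=1$, $\dhat_1=10$, $\dhat_2=0$, $\Gamma=1$, $\Hbar=\{2\}$, and $z=(z_s,z_1,z_2,z_t)=(0,5,11,12)$: assertion (i) holds (for $\delta_1=10$ take $z^{\delta}=(0,0,11,12)$), yet any layered schedule with $\Gam{x}=z$ must give the layer-$0$ copy of job $2$ a value at least $z_1+p_1+\dhat_1=16$, contradicting the vertical-arc bound by $z_2=11$. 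So (ii), read literally with ``$\Gam{x}_i=z_i$ for every $i\in\AllJobs$'', is not implied by (i), and no completion of your Step 3 can succeed.

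The plan does work once the pinning in (ii) is restricted to $i\in\Hbar$ (which is the form of the cited result that the rest of the paper actually needs): fix the anchored top copies at $z$, define every other entry, including top copies of non-anchored jobs, by longest-path values, and verify the vertical inequalities by decomposing any path ending at an anchored top copy at its successive returns to anchored top copies; each segment is an $a$--$b$ path between anchored jobs (or a $0$-started one) carrying at most $\Gamma$ deviations, so $(\ast)$ telescopes to the required bound. Note also that this paper contains no proof of Theorem~\ref{thm:OR} (it is quoted from \citep{AnchRobPSPHal}), and its downstream uses --- Theorem~\ref{thm:caracAnch} inside the proof of the MIP $(\FormAnch)$ --- only require the $\Hbar$-pinned version, because there the baseline is \emph{read off} as the layer-$\Gamma$ component of $x$ rather than prescribed outside $\Hbar$; so you should prove that weaker equivalence rather than the statement as printed.
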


\medbreak
Let us now derive a characterization of anchored sets for the RCPSP. It follows directly from Definition~\ref{def:anchoredSet} and Theorem~\ref{thm:OR} that

\begin{theorem}
\label{thm:caracAnch}
Let $(z,\sigma)$ be a solution of the RCPSP instance $\Inst$. Let $\Hbar \subseteq \AllJobs$.\\
The set $\Hbar$ is anchored w.r.t. sequencing decision $\sigma$ and schedule $z$ if and only if there exists $x$ a schedule of $\Glay(\ArcsetSigma, \Hbar)$ such that $\Gam{x}_i = z_i$ for every $i \in \AllJobs$.
\end{theorem}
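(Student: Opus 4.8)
The plan is to obtain the statement as a direct instantiation of Theorem~\ref{thm:OR}, applied not to the original precedence graph $(\AllJobs, \Arcset, p)$ but to the \emph{extended} precedence graph $(\AllJobs, \ArcsetSigma, p)$ induced by the sequencing decision $\sigma$. The key observation is that the construction of the layered graph only reads the arc-set, the nominal processing times $p$, the deviations $\dhat$, and the budget $\Gamma$; substituting $\Arcset$ by $\ArcsetSigma$ everywhere in that construction yields exactly $\Glay(\ArcsetSigma, \Hbar)$, so Theorem~\ref{thm:OR} transfers verbatim once its hypotheses are checked.

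First I would unfold Definition~\ref{def:anchoredSet}: by definition, $\Hbar$ is anchored w.r.t. $\sigma$ and $z$ precisely when, for every $\dinD$, there is a schedule $\zd$ of $(\AllJobs, \ArcsetSigma, \pplusd)$ with $\zd_i = z_i$ for all $i \in \Hbar$. This is word-for-word assertion (i) of Theorem~\ref{thm:OR}, with the precedence graph taken to be $(\AllJobs, \ArcsetSigma, p)$ rather than $(\AllJobs, \Arcset, p)$.

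Second I would verify that Theorem~\ref{thm:OR} actually applies. Since $(z,\sigma)$ is a solution of $\Inst$, the vector $z$ is by definition a schedule of $(\AllJobs, \ArcsetSigma, p)$, which is the standing hypothesis on $z$ in Theorem~\ref{thm:OR}; moreover, as remarked after constraint~(1), the existence of such a schedule (with nonzero processing times) forces $(\AllJobs, \ArcsetSigma)$ to be acyclic, so $\Glay(\ArcsetSigma, \Hbar)$ is well-defined and acyclic. The only other point is that Theorem~\ref{thm:OR} was originally stated in \citep{AnchRobPSPHal} for $\Hbar \subseteq J$; as noted in the paragraph preceding its statement, it extends to $\Hbar$ possibly containing $s$ or $t$, which covers the case at hand.

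Third, applying Theorem~\ref{thm:OR} to $(\AllJobs, \ArcsetSigma, p)$ yields that (i) holds if and only if there exists a schedule $x$ of $\Glay(\ArcsetSigma, \Hbar)$ with $\Gam{x}_i = z_i$ for every $i \in \AllJobs$, which is exactly assertion (ii) of the present statement; combining with the first step completes the proof. I do not anticipate any genuine obstacle: all the substance sits in Theorem~\ref{thm:OR}, and the work here is the bookkeeping of swapping $\ArcsetSigma$ for $\Arcset$ and checking the (mild) hypotheses on $z$ and $\ArcsetSigma$. The one place to be slightly careful is to make explicit that in Definition~\ref{def:anchoredSet} the second-stage schedule is constrained only by the precedence graph $(\AllJobs, \ArcsetSigma, \cdot)$ — the sequencing decision $\sigma$ being frozen — so that no resource constraints re-enter the recourse stage and the reduction to the purely precedence-constrained setting of Theorem~\ref{thm:OR} is legitimate.
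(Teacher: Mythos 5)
Your proposal is correct and matches the paper's argument: the paper also derives Theorem~\ref{thm:caracAnch} directly from Definition~\ref{def:anchoredSet} and Theorem~\ref{thm:OR} applied with the arc-set $\ArcsetSigma$ in place of $\Arcset$. Your write-up merely makes explicit the hypothesis checks (that $z$ is a schedule of $(\AllJobs,\ArcsetSigma,p)$, acyclicity, and $\Hbar\subseteq\AllJobs$) that the paper leaves implicit.
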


Combining Observation~\ref{obs} and Theorem~\ref{thm:caracAnch}, it follows that

\begin{corollary}
\label{cor:layeredQrond}
For every $\sigma \in \Sigmas$, the worst-case makespan $\QGamma(\sigma)$ is equal to the minimum value of $\Gam{x}_t$ for $x$ a schedule of $\Glay(\ArcsetSigma, \{t\})$.
\end{corollary}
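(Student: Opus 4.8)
The plan is to obtain the statement as the composition of the two results just quoted, specialized to the anchored set $\Hbar = \{t\}$. By Observation~\ref{obs}, $\QGamma(\sigma)$ equals the smallest makespan $z_t$ over schedules $z$ of $\PrecGraphAsigmaP$ for which $(z,\sigma,\{t\})$ is an anchored solution, i.e.\ for which $\{t\}$ is anchored w.r.t.\ $z$ and $\sigma$. So it suffices to prove that this minimum coincides with $\min\{\,\Gam{x}_t \colon x \text{ a schedule of } \Glay(\ArcsetSigma,\{t\})\,\}$, which I would do by establishing the two inequalities.

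For the inequality where a minimizing $z$ is given: $(z,\sigma)$ is a solution of $\Inst$ with $\{t\}$ anchored, so Theorem~\ref{thm:caracAnch} yields a schedule $x$ of $\Glay(\ArcsetSigma,\{t\})$ with $\Gam{x}_i = z_i$ for every $i \in \AllJobs$; taking $i=t$ gives $\Gam{x}_t = z_t = \QGamma(\sigma)$, so the layered-graph minimum is at most $\QGamma(\sigma)$. For the converse inequality, start from any schedule $x$ of $\Glay(\ArcsetSigma,\{t\})$ and set $z_i := \Gam{x}_i$. One then checks that $z$ is a schedule of $\PrecGraphAsigmaP$: layer $\Gamma$ of the layered graph carries a full copy of the horizontal arcs $(\Gam{i},\Gam{j})$ of weight $p_i$ for each $(i,j)\in\ArcsetSigma$, hence $z_j - z_i \geq p_i$ for every $(i,j)\in\ArcsetSigma$, and $z \geq 0$ since $x \geq 0$. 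So $(z,\sigma)$ is a solution of $\Inst$ with $\Gam{x}_i = z_i$ for all $i$, and the converse direction of Theorem~\ref{thm:caracAnch} says $\{t\}$ is anchored w.r.t.\ $z$ and $\sigma$. Thus $(z,\sigma,\{t\})$ is an anchored solution, and Observation~\ref{obs} gives $\QGamma(\sigma) \leq z_t = \Gam{x}_t$; minimizing over $x$ closes the argument.

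I do not expect a genuine obstacle here: given Theorem~\ref{thm:caracAnch}, the corollary is essentially a change of variables, and the one bookkeeping step — that restricting a schedule of $\Glay(\ArcsetSigma,\{t\})$ to its last layer produces a schedule of $\PrecGraphAsigmaP$ — is immediate from the definition of the horizontal arcs. If one prefers the optima to be attained rather than infima, it is enough to note that $\sigma\in\Sigmas$ forces $\ArcsetSigma$ to be acyclic, hence $\Glay(\ArcsetSigma,\{t\})$ is acyclic, so both optimal values are realized as longest-path lengths.
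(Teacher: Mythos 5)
Your proposal is correct and matches the paper's route: the paper derives the corollary exactly by combining Observation~\ref{obs} with Theorem~\ref{thm:caracAnch} (stating it follows directly), and your two inequalities simply spell out that combination, including the bookkeeping step that layer $\Gamma$ of $\Glay(\ArcsetSigma,\{t\})$ carries the horizontal arcs so its restriction is a schedule of $\PrecGraphAsigmaP$ — the same fact the paper uses in the proof of Theorem~\ref{thm:MIPAnch}.
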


Hence the layered graph can be used to compute the worst-case makespan $\QGamma(\sigma)$, since the minimum value of $\Gam{x}_t$ for $x$ a schedule of $\Glay(\ArcsetSigma, \{t\})$ is equal to the length of the longest \odpath{\Gam{s}}{\Gam{t}}path in $\Glay(\ArcsetSigma, \{t\})$. Recall that $\Glay(\ArcsetSigma, \{t\})$ is acyclic when $(\AllJobs, \ArcsetSigma)$ is acyclic. Such longest path can be computed in polynomial time by dynamic programming.

\subsection{Compact MIP reformulations}
\label{sec:reformulation}

Let us now introduce new MIP reformulation for the Anchor-Robust and Adjustable-Robust RCPSP.
The formulations involve the following decision variables:\\
-- variables $f$ and $\sigma$ as in the flow formulation for the RCPSP;\\
-- continuous variables $\gam{x}_j \geq 0$ for every $j \in \AllJobs$, $\gamma \in \{0, \dots, \Gamma \}$;\\
-- for the Anchor-Robust RCPSP, binary variables $h \in \BinJobs$.

\medbreak

\begin{theorem}
\label{thm:MIPAnch}
The Anchor-Robust RCPSP under budgeted uncertainty admits the following compact MIP reformulation $(\FormAnch)$.
\end{theorem}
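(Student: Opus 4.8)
The plan is to build the compact MIP for the Anchor-Robust RCPSP by \emph{merging} the flow formulation for the RCPSP with a linearized encoding of the layered-graph characterization of anchored sets from Theorem~\ref{thm:caracAnch}. First I would recall that a solution of the Anchor-Robust RCPSP is a triplet $(z,\sigma,H\cupt)$ where $(z,\sigma)$ is feasible for $\Inst$, $z$ has makespan at most $M$, and $H\cupt$ is anchored w.r.t.\ $z$ and $\sigma$; the objective is $\max |H|$. The variables $(f,\sigma)$ together with constraints (2)--(7) of the flow formulation exactly encode $\sigma\in\Sigmas$, and constraint (1) says $x^{(\Gamma)}$ (playing the role of $z$) is a schedule of $(\AllJobs,\ArcsetSigma,p)$; a deadline constraint $x^{(\Gamma)}_t\le M$ replaces the objective $x_t$ of the flow formulation.

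The core step is to write, as linear constraints in $(\sigma,x)$, the statement ``there exists a schedule $x$ of $\Glay(\ArcsetSigma,H\cupt)$ with $x^{(\Gamma)}_i=z_i$''. This has three arc families. Horizontal arcs $(\gam{i},\gam{j})$ for $(i,j)\in\Arcset$ give $x^{(\gamma)}_j-x^{(\gamma)}_i\ge p_i$ unconditionally; but because we also want the arcs of $\ArcsetSigma\setminus\Arcset$, i.e.\ the resource arcs, to be present exactly when $\sigma_{ij}=1$, I would use the big-$M$ trick as in (1): $x^{(\gamma)}_j-x^{(\gamma)}_i\ge p_i-\barM(1-\sigma_{ij})$ for all $(i,j)\in\Pairs$ and all layers $\gamma$. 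Transversal arcs similarly become $x^{(\gamma)}_j-x^{(\gamma+1)}_i\ge p_i+\dhat_i-\barM(1-\sigma_{ij})$ for $\gamma<\Gamma$. Vertical arcs $(\gam{j},\Gam{j})$ exist only for $j\in H\cupt$, so they are governed by the $h$ variables: $x^{(\Gamma)}_j-x^{(\gamma)}_j\ge -\barM(1-h_j)$ for $j\in J$, $\gamma<\Gamma$, together with forcing $h_t=1$ (or equivalently imposing the vertical arc for $t$ unconditionally). Finally, since the schedule of the layered graph must ``project'' onto $z$ at layer $\Gamma$, I identify $z$ with $x^{(\Gamma)}$ directly, which automatically makes the makespan constraint read $x^{(\Gamma)}_t\le M$ and makes ``$z$ is a schedule of $(\AllJobs,\ArcsetSigma,p)$'' a special case ($\gamma=\Gamma$) of the horizontal constraints. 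The objective is $\max\sum_{j\in J}h_j$.

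To prove correctness I would argue both directions. Given a feasible anchored solution $(z,\sigma,H\cupt)$: pick $f$ witnessing $\sigma\in\Sigmas$, set $h_j=1$ iff $j\in H$, and use Theorem~\ref{thm:caracAnch} to get a schedule $x$ of $\Glay(\ArcsetSigma,H\cupt)$ with $x^{(\Gamma)}=z$; one checks each MIP constraint holds (the big-$M$ constraints are slack whenever $\sigma_{ij}=0$ or $h_j=0$, and are exactly the layered-graph arc inequalities otherwise, with $\barM$ a valid upper bound as in the flow formulation). Conversely, given a MIP-feasible point $(f,\sigma,x,h)$: constraints (2)--(7) give $\sigma\in\Sigmas$; the layer-$\Gamma$ horizontal constraints give that $z:=x^{(\Gamma)}$ is a schedule of $(\AllJobs,\ArcsetSigma,p)$ with $z_t\le M$; and restricting $x$ to the arcs actually present in $\Glay(\ArcsetSigma,H\cupt)$ with $H=\{j: h_j=1\}$ shows $x$ is a schedule of that layered graph with $x^{(\Gamma)}=z$, so by Theorem~\ref{thm:caracAnch} $H\cupt$ is anchored w.r.t.\ $z,\sigma$. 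Matching objective values $|H|=\sum h_j$ finishes the equivalence.

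The main obstacle I anticipate is the careful bookkeeping around the big-$M$ constant and the role of the $h$ variables: one must verify that a single $\barM$ (an upper bound on the relevant longest paths in the layered graph, which is at most roughly $(\Gamma+1)$ times a makespan bound, or can be taken as $M$ plus a deviation slack since all $x^{(\gamma)}$ are squeezed between $0$ and the layer-$\Gamma$ values) genuinely renders the constraints vacuous when the corresponding arc is absent, without accidentally cutting off feasible schedules — in particular for the vertical arcs, the inequality $x^{(\Gamma)}_j\ge x^{(\gamma)}_j-\barM(1-h_j)$ must be shown to be non-binding when $h_j=0$ given the ranges of the $x$ variables. A secondary subtlety is making sure that when $h_j=1$ we only get ``$x^{(\Gamma)}_j\ge x^{(\gamma)}_j$'' (a one-sided inequality), and arguing that this suffices: the layered graph only needs the schedule to respect $x^{(\gamma)}_j\le x^{(\Gamma)}_j$, and feasibility of the repaired schedules in Theorem~\ref{thm:OR} does not require equality at intermediate layers, only at layer $\Gamma$ where we have imposed $z=x^{(\Gamma)}$ by identification. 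Everything else is routine substitution and matching of constraints.
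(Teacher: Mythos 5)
Your construction and proof plan are essentially the paper's: encode $\sigma\in\Sigmas$ through the flow constraints (2)--(7), linearize the arcs of $\Glay(\ArcsetSigma,H\cupt)$ with big-$M$ terms activated by $\sigma$ (horizontal and transversal arcs) and by $h$ (vertical arcs), keep the vertical arcs of $t$ unconditionally, identify the baseline schedule with the layer-$\Gamma$ copy so that the deadline reads $\Gam{x}_t\le M$, and invoke Theorem~\ref{thm:caracAnch} for both directions of the equivalence, with the objective $\sum_j h_j=|H|$.

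The one step you leave unresolved is precisely the step the paper's proof is devoted to: the displayed formulation $(\FormAnch)$ uses the deadline $M$ itself as the big-$M$ constant, not a generic upper bound $\Mbar$. The paper validates this choice by observing that the deadline constraint (e) together with the vertical constraints (c) for $t$ force $\gam{x}_t\le M$ in every layer; then, since every job has a path to $t$ within its layer through arcs of $\Arcset$ (for which $\sigma_{ij}=1$), one gets $\gam{x}_i+p_i\le\gam{x}_t\le M$, and $\gamp{x}_i+p_i+\dhat_i\le M$ via a transversal arc followed by a horizontal path; these bounds make constraints (a), (b) and (d) valid whenever $\sigma_{ij}=0$ or $h_j=0$. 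Your sketched justification---that ``all $x^{(\gamma)}$ are squeezed between $0$ and the layer-$\Gamma$ values''---is not correct as stated: for a job $j$ with $h_j=0$ there is no vertical arc, so $\gam{x}_j$ need not be bounded by $\Gam{x}_j$; the bound has to be routed through $t$ as above. If instead you retreat to a larger constant $\Mbar$ (as in your fallback suggestions), you prove a correct but different formulation, not the one stated in the theorem. Supplying the bounding argument through $t$ closes this gap and the rest of your argument goes through.
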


\begin{tabular}{l l l l c}
$(\FormAnch)$ \hspace{0.5cm} & $\max$ & $\sum_{i \in J} h_i$ \\
& s.t. & $\gam{x}_j - \gam{x}_i \geq p_i - M(1-\sigma_{ij})$ & $\forall i,j \in \Pairs$, $\forall \gamma \leq \Gamma$ & (a)\\
& & $\gam{x}_j - \gamp{x}_i \geq p_i +\dhat_i - M(1-\sigma_{ij})$ & $\forall i,j \in \Pairs$, $\forall \gamma < \Gamma$ & (b)\\
& & $\Gam{x}_t - \gam{x}_t \geq 0$ & $\forall \gamma < \Gamma$ & (c)\\
& & $\Gam{x}_j - \gam{x}_j \geq -M( 1- h_j)$ & $\forall j \in J$, $\forall \gamma < \Gamma$ & (d) \\
& & $\Gam{x}_t \leq M$ & & (e)\\
& & $\sigma$, $f$ satisfy (2)--(5) \\
& & $\fijk \geq 0$ & $\forall i,j \in \Pairs, k \in \Ress$ & (6)\\
& & $\sigma_{ij} \in \{0,1\}$ & $\forall i,j \in \Pairs$ & (7)\\
& & $\gam{x}_j \geq 0 $ & $\forall j \in \AllJobs$, $\forall \gamma \leq \Gamma$ & (8)\\
& & $h_j \in \{0,1\}$ & $\forall j \in J$ & (9) \\
\end{tabular}

\begin{proof}
Constraints (2)--(7) ensure that $\sigma \in \Sigmas$.
By Theorem~\ref{thm:caracAnch}, the Anchor-Robust RCPSP is to find a set $H$, a sequencing decision $\sigma$, and a schedule $x$ of $\Glay(\ArcsetSigma,H\cupt)$ such that $\Gam{x}_t \leq M$. Indeed if $x$ is a schedule of $\Glay(\ArcsetSigma,H\cupt)$, then $\Gam{x}$ is a schedule of $\PrecGraphAsigmaP$ due to horizontal arcs in layer $\Gamma$: hence $\Gam{x}$ will be the baseline schedule. Let $h \in \BinJobs$ be the incidence vector of set $H$. It remains to show that (a)--(d) correctly enforce constraints from $\Glay(\ArcsetSigma, H \cupt)$.
For $\sigmaij = 1$, constraints (a) (resp. (b)) enforce horizontal arcs (resp. transversal arcs) constraints. Constraints (c) enforce vertical arc constraints between copies of $t$. For $h_j=1$, constraints (d) enforce vertical arc constraints between copies of $j$.
Hence it suffices to check that for $\sigmaij=0$ (resp. $h_j=0$) constraints (a)--(b) (resp. constraints (d)) are valid for any schedule $x$.
Note first that the deadline constraint (e) and vertical arc constraints (c) imply $\gam{x}_t \leq M$ for every $\gamma \leq \Gamma$. Thus $\gamp{x}_i + p_i +\dhat_i \leq M$ for every $\gamma < \Gamma$, and $\gam{x}_i + p_i \leq M$ for every $\gamma \leq \Gamma$.
Hence $\gam{x}_i+p_i - \gam{x}_j \leq \gam{x}_i+p_i \leq M$, and constraint (a) is valid if $\sigmaij=0$.
Similarly $\gamp{x}_i+p_i+\dhat_i - \gam{x}_j \leq \gamp{x}_i+p_i+\dhat_i \leq M$, and constraint (b) is valid if $\sigmaij=0$.
Finally $\gam{x}_j - \Gam{x}_j \leq M$, and constraint (d) is valid if $h_j=0$.
\end{proof}

Note that deadline $M$ is used as a common bigM value in constraints (a), (b) and (d).

A similar result holds for the Adjustable-Robust RCPSP.

\begin{theorem}
\label{thm:MIPAdj}
The Adjustable-Robust RCPSP under budgeted uncertainty admits the following compact MIP reformulation $(\FormAdj)$, where $\Mbar$ is an upper-bound on the optimal value.
\end{theorem}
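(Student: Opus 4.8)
The plan is to follow the same scheme as the proof of Theorem~\ref{thm:MIPAnch}, replacing Theorem~\ref{thm:caracAnch} by Corollary~\ref{cor:layeredQrond}. Recall that the Adjustable-Robust RCPSP asks for $\min_{\sigma \in \Sigmas} \QGamma(\sigma)$, and that by Corollary~\ref{cor:layeredQrond} the quantity $\QGamma(\sigma)$ equals the minimum of $\Gam{x}_t$ over all schedules $x$ of the layered graph $\Glay(\ArcsetSigma, \{t\})$. Hence the problem rewrites as: minimize $\Gam{x}_t$ over pairs $(\sigma, x)$ such that $\sigma \in \Sigmas$ and $x$ is a schedule of $\Glay(\ArcsetSigma, \{t\})$. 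I would then read off $(\FormAdj)$ from this description: constraints (2)--(7) (together with $f \ge 0$) encode $\sigma \in \Sigmas$ exactly as in the flow formulation; for $\sigma_{ij} = 1$, constraints (a) (for $\gamma \le \Gamma$) and (b) (for $\gamma < \Gamma$) reproduce the horizontal and transversal arc inequalities of $\Glay(\ArcsetSigma, \{t\})$, and constraint (c) reproduces the vertical arc inequalities between the copies of $t$ --- since here the anchored set is reduced to $\{t\}$, there is no counterpart of constraint (d) of $(\FormAnch)$ and no $h$ variables.

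With $(\FormAdj)$ thus identified, I would prove that its optimal value $W^*$ equals $V^* := \min_{\sigma \in \Sigmas} \QGamma(\sigma)$. For $W^* \le V^*$: pick $\sigma^* \in \Sigmas$ attaining $V^*$ and, using Corollary~\ref{cor:layeredQrond}, a schedule $x$ of $\Glay(\ArcsetSigma^*, \{t\})$ with $\Gam{x}_t = V^*$; completing $\sigma^*$ with a compatible resource flow $f$ (which exists since $\sigma^* \in \Sigmas$), the triple $(\sigma^*, f, x)$ satisfies every constraint of $(\FormAdj)$ --- the constraints indexed by pairs with $\sigma^*_{ij} = 1$ hold because $x$ is a schedule of $\Glay(\ArcsetSigma^*, \{t\})$, and the remaining ones are discussed below --- with objective value $V^*$. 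For $V^* \le W^*$: take an optimal $(\sigma, f, x)$ of $(\FormAdj)$; then $\sigma \in \Sigmas$ by (2)--(7), and restricting constraints (a), (b), (c) to the pairs with $\sigma_{ij} = 1$ shows that $x$ is a schedule of $\Glay(\ArcsetSigma, \{t\})$, so Corollary~\ref{cor:layeredQrond} gives $\QGamma(\sigma) \le \Gam{x}_t = W^*$, hence $V^* \le W^*$. Combining the two inequalities gives $W^* = V^*$, and an optimal solution of $(\FormAdj)$ yields an optimal sequencing decision $\sigma$, the optimal worst-case makespan being $\Gam{x}_t$.

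I expect the delicate point to be, as in the proof of Theorem~\ref{thm:MIPAnch}, the validity of constraints (a) and (b) for pairs with $\sigma_{ij} = 0$: there is no deadline constraint here to keep the variables $\gam{x}$ small, so this is precisely where the hypothesis that $\Mbar$ upper-bounds the optimal value enters. In the construction for $W^* \le V^*$ the schedule $x$ satisfies $\Gam{x}_t = V^* \le \Mbar$; using the vertical arcs $(\gam{t}, \Gam{t})$ one gets $\gam{x}_t \le \Gam{x}_t \le \Mbar$ for every layer $\gamma$, and since every job reaches $t$ through horizontal (resp.\ transversal then horizontal) arcs inside $\Glay(\ArcsetSigma^*, \{t\})$ --- a property inherited from $\Arcset \subseteq \ArcsetSigma^*$ --- one obtains $\gam{x}_i + p_i \le \Mbar$ and $\gamp{x}_i + p_i + \dhat_i \le \Mbar$; together with $\gam{x}_j \ge 0$ this gives $\gam{x}_i + p_i - \gam{x}_j \le \Mbar$ and $\gamp{x}_i + p_i + \dhat_i - \gam{x}_j \le \Mbar$, so (a) and (b) hold whatever the value of $\sigma_{ij}$. (Equivalently, one may note that adding the redundant constraint $\Gam{x}_t \le \Mbar$ to $(\FormAdj)$ does not change its optimal value, and then the argument is literally that of Theorem~\ref{thm:MIPAnch} with $M := \Mbar$ and $H := \varnothing$.) The reverse inequality $V^* \le W^*$ uses only the constraints for pairs with $\sigma_{ij} = 1$ and needs no assumption on $\Mbar$.
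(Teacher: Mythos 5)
Your proposal is correct and follows essentially the same route as the paper's proof: it invokes Corollary~\ref{cor:layeredQrond} to reduce the problem to scheduling the layered graph $\Glay(\ArcsetSigma,\{t\})$, checks that the constraints with $\sigmaij=1$ reproduce the horizontal, transversal and vertical arcs, and verifies that the big-$M$ relaxed constraints with $\sigmaij=0$ are valid w.l.o.g.\ because $\Mbar$ upper-bounds the optimal value (via $\gam{x}_t \le \Gam{x}_t \le \Mbar$ and the path from each job to $t$). Your write-up is merely a bit more explicit than the paper's (the two-inequality comparison of optimal values and the observation that only the direction using feasibility of the layered-graph schedule needs $\Mbar$), and your labels (a)--(c) should read (i)--(iii) for $(\FormAdj)$, but these are cosmetic differences.
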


\begin{tabular}{l l l l c}
($\FormAdj$) \hspace{0.5cm} & $\min$ & $\Gam{x}_t$ \\
& s.t. & $\gam{x}_j - \gam{x}_i \geq p_i - \barM(1-\sigma_{ij})$ & $\forall i,j \in \Pairs$, $\forall \gamma \leq \Gamma$ & (i)\\
& & $\gam{x}_j - \gamp{x}_i \geq p_i +\dhat_i - \barM(1-\sigma_{ij})$ & $\forall i,j \in \Pairs$, $\forall \gamma < \Gamma$ & (ii)\\
& & $\Gam{x}_t - \gam{x}_t \geq 0$ & $\forall \gamma < \Gamma$ & (iii)\\
& & $\sigma$, $f$ satisfy (2)-(5) \\
& & $\fijk \geq 0$ & $\forall i,j \in \Pairs, k \in \Ress$ &(6)\\
& & $\sigma_{ij} \in \{0,1\}$ & $\forall i,j \in \Pairs$ & (7)\\
& & $\gam{x}_j \geq 0 $ & $\forall j \in \AllJobs$, $\forall \gamma \leq \Gamma$ & (8)\\
\end{tabular}

\begin{proof}
Constraints (2)--(7) model that $\sigma \in \Sigmas$.
By Corollary~\ref{cor:layeredQrond} the worst-case makespan $\QGamma(\sigma)$ is the minimum value of $\Gam{x}_t$ for $x$ a schedule of $\Glay(\ArcsetSigma, \{t\})$.
It suffices to show that (i)-(ii)-(iii) correctly enforce constraints from $\Glay(\ArcsetSigma, \{t\})$.
The proof is very similar as the one of Theorem~\ref{thm:MIPAnch}.
For $\sigmaij = 1$, constraints (i) (resp. (ii)) enforce horizontal arcs (resp. transversal arcs) constraints. Constraints (iii) enforce vertical arc constraints between copies of $t$. 
Hence it suffices to check that for $\sigmaij=0$ constraints (i)--(ii) are valid for any optimal schedule $x$.
The value $\Mbar$ is an upper bound hence $\Gam{x}_t \leq \Mbar$ can be assumed w.l.o.g. Vertical arc constraints (iii) then imply $\gam{x}_t \leq \Mbar$ for every $\gamma \leq \Gamma$. Thus $\gamp{x}_i + p_i +\dhat_i \leq \Mbar$ for every $\gamma < \Gamma$, and $\gam{x}_i + p_i \leq \Mbar$ for every $\gamma \leq \Gamma$.
Hence $\gam{x}_i+p_i - \gam{x}_j \leq \gam{x}_i+p_i \leq \Mbar$, and constraint (i) can be imposed w.l.o.g. if $\sigmaij=0$.
Similarly $\gamp{x}_i+p_i+\dhat_i - \gam{x}_j \leq \gamp{x}_i+p_i+\dhat_i \leq \Mbar$, and constraint (b) can be imposed w.l.o.g. if $\sigmaij=0$.
\end{proof}

Note that $\Mbar$ can be set to $\Mbar = \sum_{i \in J} p_i+\dhat_i$.
\medbreak
The same compact reformulation for the Adjustable-Robust RCPSP was obtained independently in \citep{BoldGoerigk2020}.
\medbreak
We emphasize that obtaining such compact reformulations is rather an exception than a general rule for robust 2-stage problems. 
Hence the Anchor-Robust RCPSP and the Adjustable-Robust RCPSP can be solved directly by MIP solving. Besides this formulation, the literature for the Adjustable-Robust RCPSP contains only decomposition methods based on exponential formulations.
Even if such reformulations have reasonable (polynomial) size, they become difficult to solve for off-the-shelf MIP solvers when the size of the instance increases. This motivates the design of heuristic algorithms.

\subsection{Heuristic algorithms for the Adjustable-Robust RCPSP}
\label{sec:priorityRules}

Let us propose a framework for designing heuristics for the Adjustable-Robust RCPSP.
Consider a heuristic algorithm $\heurAlgo$ to solve the (deterministic) RCPSP; it is assumed that when applied to instance $\Inst$, algorithm $\heurAlgo$ outputs a sequencing decision $\heurAlgo(\Inst) = \sigmaHeur \in \Sigmas$. The heuristic algorithm $\heurAlgo$ can be used to design the following heuristic $\AdjGammaHeur$ for the Adjustable-Robust RCPSP:

\begin{algorithm}[H]
\KwData{Instance $\Inst$, set $\Delta$ with budget $\Gamma$, algorithm $\heurAlgo$}
\textbf{Let} $\sigmaHeur := \heurAlgo(\Inst)$\;
\textbf{Let} $\QGamma(\sigmaHeur)$ := longest \odpath{\Gam{s}}{\Gam{t}}path in $\Glay(\ArcsetSigmaHeur,\{t\})$\;
\Return solution $\sigmaHeur$ with value $\QGamma(\sigmaHeur)$ \;
\caption{Heuristic $\AdjGammaHeur$ for the Adjustable-Robust RCPSP}
\end{algorithm}

Algorithm $\AdjGammaHeur$ has complexity $C_{\heurAlgo}+O((|J|+|\Arcset|)\Gamma)$ where $C_{\heurAlgo}$ is the complexity of algorithm $\heurAlgo$. Indeed the computation of $\QGamma(\sigmaHeur)$ can be done by dynamic programming since $\Glay(\ArcsetSigmaHeur, \{t\})$ is acyclic. The vertex-set (resp. arc-set) of $\Glay(\ArcsetSigmaHeur, \{t\})$ has size $O(|J|\Gamma)$ (resp. $O(|\Arcset|\Gamma)$) leading to longest path computation complexity $O((|J|+|\Arcset|)\Gamma)$.

\medbreak

Let us now present a class of heuristic algorithms for the RCPSP to instantiate algorithm $\heurAlgo$. They are based on Parallel Schedule Generation Scheme (Parallel SGS) and priority rules. We refer to \citep{KolischHartmannRCPSPHeur} for details and references on heuristic algorithms for the RCPSP.
In Parallel SGS, a feasible schedule is built incrementally by time incrementation. At current time, the set of eligible jobs is formed with all jobs whose predecessors in $(\AllJobs, \Arcset, p)$ are scheduled and completed, and such that at current time there is enough resources available to start the job. A job is selected for the eligible set, and scheduled. If the eligible set is empty, time is incremented to the next date where a job completes.

To select a job from the eligible set, a common method is to use \emph{priority rules}. We consider static priority rules that are computed before Parallel SGS is executed. In that case, every job is given a priority $\pi(i)$. When a job is to be selected from the eligible set, the job with highest priority $\pi(i)$ is selected.
In the sequel, a total of 7 priority rules are considered, a trivial one and 6 priority rules from the literature:\\
  $\bullet$ Trivial rule (ID): $\pi(i)$ is the index of job $i$\\
  $\bullet$ Shortest Processing Time (SPT): $\pi(i) = -p_i$\\
  $\bullet$ Most Total Successors (MTS): $\pi(i)$ is the number of successors in the transitive closure of $(\AllJobs, \Arcset)$\\
  $\bullet$ Latest Finish Time (LFT): $\pi(i) = \bar{x}_i + p_i$ where $\bar{x}$ is the latest schedule of $(\AllJobs, \Arcset, p)$ with minimum makespan\\
  $\bullet$ Latest Starting Time (LST): $\pi(i) = \bar{x}_i$\\
  $\bullet$ Minimum Slack (MSLK): $\pi(i) = -(\bar{x}_i - \underline{x}_i)$, where $\underline{x}$ is the earliest schedule of $(\AllJobs, \Arcset, p)$\\
  $\bullet$ Greatest Rank Positional Weight (GRPW): $\pi(i) = p_i + \sum_{j: (i,j) \in \Arcset} p_j$\\

Importantly, ParallelSGS with priority rule $\pi$ can easily be executed so that it outputs a resource flow, and thus a sequencing decision $\sigmaHeur^{\pi} \in \Sigmas$. This was done, e.g., in \citep{Artigues2003}.
Let $\heurAlgo^{\pi}$ denote the algorithm corresponding to Parallel SGS with priority $\pi$.
Depending on the instance, it is not always the same priority rule that yields the best $\QGamma(\sigmaHeur^{\pi})$ value. Consider the following heuristic, denoted by BestRule:
\medbreak
\begin{algorithm}[H]
\KwData{Instance $\Inst$, set $\Delta$ with budget $\Gamma$}
\For{priority rule $\pi$ in \{ID, SPT, MTS, LFT, LST, MSLK, GRPW\}}{
    Let $\sigmaHeur^{\pi}$, $\QGamma(\sigmaHeur^{\pi})$ be the output of $\AdjGamma{\heurAlgo^{\pi}}$ \;
}
Let $\pi^* := \arg\min_{\pi}\QGamma(\sigmaHeur^{\pi})$ \;
\Return solution $\sigmaHeur^{\pi^*}$ with value $\QGamma(\sigmaHeur^{\pi^*})$\;
\caption{Heuristic BestRule for the Adjustable-Robust RCPSP}
\end{algorithm}
\medbreak
Note that the selected sequencing decision is the one that gives the best worst-case makespan value for the considered uncertainty set. Namely, the output of the BestRule heuristic depends on the uncertainty budget $\Gamma$.

\subsection{Heuristic for the Anchor-Robust RCPSP}

Due to the connection between Adjustable-Robust RCPSP and Anchor-Robust RCPSP, it can be expected that solving Anchor-Robust RCPSP will be computationally challenging, and heuristics will be needed to solve medium-size instances. Note that in the MIP reformulations, the formulation for Anchor-Robust RCPSP features the same variables as that of the Adjustable-Robust RCPSP, plus additional binary variables $h \in \BinJobs$.
Let us now propose a MIP-based heuristic for solving the Anchor-Robust RCPSP.
\medbreak
Consider the case where the deadline $M$ is computed by solving the Adjustable-Robust RCPSP in a first phase, i.e., $M = \QGamma(\sigmaHeur)$ for some sequencing decision $\sigmaHeur \in \Sigmas$. This can be done through the MIP formulation or a heuristic algorithm such as BestRule. 
\medbreak
Note first that the knowledge of $\sigmaHeur$ readily gives a feasible solution for the Anchor-Robust RCPSP with deadline $M =\QGamma(\sigmaHeur)$. Indeed with $x$ the earliest schedule of $\Glay(\ArcsetSigma, \sinkt)$, it holds that $(\Gam{x}, \sigmaHeur, \sinkt)$ is an anchored solution with baseline schedule $\Gam{x}$ respecting deadline $\Gam{x}_t \leq M$.
In general, the optimal solution of the Anchor-Robust RCPSP $(z^*, \sigma^*, H^* \cupt)$ can be with $\sigma^* \not= \sigmaHeur$.
Consider the following heuristic, which is to solve the Anchor-Robust RCPSP while enforcing $\sigma^* = \sigmaHeur$:
\medbreak
\begin{algorithm}[H]
\KwData{Instance $\Inst$, set $\Delta$ with budget $\Gamma$, sequencing decision $\sigmaHeur$, deadline $M= \QGamma(\sigmaHeur)$}
Let $(x^*, H^*) := \arg\max |H|$\\
\hspace{3.5cm} s.t. $H \subseteq J$, $x$ schedule of $\Glay(\ArcsetSigmaHeur, H)$ with $\Gam{x}_t \leq M$\;
\Return solution $(\Gam{x^*}, \sigmaHeur, H^*)$\;
\caption{Heuristic FixedSequence for the Anchor-Robust RCPSP}
\end{algorithm}

\newpage
The maximization step can be done by adapting the MIP from Theorem~\ref{thm:MIPAnch} to fixed sequencing decision $\sigmaHeur$. That is, $(x^*, H^*)$ is an optimal solution to the MIP formulation

\begin{center}
\begin{tabular}{l l l l}
$\max$ & $\sum_{i \in J} h_i$ \\
s.t. & $\gam{x}_j - \gam{x}_i \geq p_i$ & $\forall (i,j) \in \ArcsetSigmaHeur$, $\forall \gamma \leq \Gamma$ \\
& $\gam{x}_j - \gamp{x}_i \geq p_i +\dhat_i$ & $\forall (i,j) \in \ArcsetSigmaHeur$, $\forall \gamma < \Gamma$ & \\
& $\Gam{x}_t - \gam{x}_t \geq 0$ & $\forall \gamma < \Gamma$ & \\
& $\Gam{x}_j - \gam{x}_j \geq -M( 1- h_j)$ & $\forall j \in J$, $\forall \gamma < \Gamma$ & \\
& $\Gam{x}_t \leq M$ & & \\
& $\gam{x}_j \geq 0 $ & $\forall j \in \AllJobs$, $\forall \gamma \leq \Gamma$ & \\
& $h_j \in \{0,1\}$ & $\forall j \in J$ & \\
\end{tabular}
\end{center}

This is an instance of the Anchor-Robust Project Scheduling Problem for PERT scheduling from \citep{AnchRobPSPHal}. While the problem is still NP-hard, it is easier to solve than the Anchor-Robust RCPSP because there are no sequencing decision variables.
In the case where $\sigmaHeur$ is obtained by the BestRule heuristic, the FixedSequence heuristic is referred to as the BestRuleSequence heuristic.

\section{Computational results: MIP for Adjustable-Robust RCPSP}
\label{sec:num:exactAdj}

In this section numerical performances of the compact reformulation obtained in Section~\ref{sec:reformulation} for the Adjustable-Robust RCPSP are discussed. 

\subsection{Instances and settings}

The instances are built upon RCPSP instances from the PSPLib. The number of jobs is $n \in \{30, 60, 90, 120\}$. The instances with $n=30$ jobs are the same as \citep{Bruni2016,Bruni2018}. For fixed $n$, there are 480 RCPSP instances. There are 4 resources types ($|\Ress|=4$). The instances differ through three parameters:\\
-- the network complexity NC $\in \{1.5, 1.8, 2.1\}$ corresponding to the average degree of jobs in the precedence graph;\\
-- the resource factor RF $\in \{0.25, 0.50, 0.75, 1\}$ indicating the number of resources used by a job;\\
-- the resource strength RS $\in \{0.20, 0.50, 0.70, 1\}$ quantifying the size of resource conflicts.

Regarding uncertainty, deviation is defined by $\dhat = 0.5 p$ and the uncertainty budget is $\Gamma \in \{3,5,7\}$. This leads to 1440 instances for each value of $n$. The case of $\Gamma=0$, corresponding to deterministic RCPSP, will also be considered.

\medbreak
The MIP formulation $(\FormAdj)$ has been implemented using Julia 0.6.2, with JuMP 0.18.5. It is solved with CPLEX 12.8 on a PC under Windows 10 with Intel Core i7-7500U CPU 2.90GHz and 8 Go RAM. 
The upper bound $\barM$ is set to $\sum_{i \in \Jobs} (p_i+\dhat_i)$.
The time limit is set to 1200 seconds. 

\medbreak
In the sequel we report averaged results. Detailed computational results can be found in the Appendix.

\subsection{Performance of $(\FormAdj)$ and impact of parameters on small instances}
\label{sec:res_adj_small}

Let us first consider instances with $n=30$ jobs, as these small instances were considered in the literature \citep{Bruni2016,Bruni2018}. In this section computational results are presented and the impact of parameters is studied to identify the hardest instances to solve with the considered formulation.

\subsubsection{Impact of the budget}

Let us first present computational results and analyze the impact of uncertainty budget $\Gamma$. In Table~\ref{tab:G357} the following results are reported for $\Gamma = 3,5,7$:\\
-- \#solved: the number of instances solved to optimality within time limit;\\
-- time: the computation time averaged on instances solved to optimality, in seconds;\\
-- \#unsolved: the number of instances not solved to optimality within time limit;\\
-- gap: the final gap averaged on instances not solved to optimality.

\begin{table}[H]
    \centering
    \footnotesize
    \begin{tabular}{c @{\hspace{2cm}} c c c c}
$\Gamma$ & \#solved & time(s) & \#unsolved & gap \\
\hline
3 & 345 & 39.53 & 135 & 19.26\% \\
5 & 344 & 44.65 & 136 & 19.01\% \\
7 & 337 & 50.98 & 143 & 19.17\% \\
\hline
all & 1026 & 45.01 & 414 & 19.15\% \\
\end{tabular}
    \caption{Performance of $(\FormAdj)$ depending on the budget $\Gamma$ for $n=30$ jobs.}
    \label{tab:G357}
\end{table}

Note first that direct implementation of the MIP reformulation allows us to solve 1026 instances over the total number of 1440 instances.
The impact of the uncertainty budget appears to be limited: when $\Gamma$ is increased, the number of solved instances and average computation time smoothly deteriorate.
\medbreak
The performance of $(\FormAdj)$ can be compared to state-of-the-art methods from the literature, that are decomposition methods of \citep{Bruni2016,Bruni2018}. The authors compare three methods; the Primal Method from \citep{Bruni2018} results to be the more efficient of the three. The Primal Method solves 767 instances out of 1440 within the time limit of 1200 seconds. The average time for solved instances is 113,13 seconds. The average final gap for unsolved instances is 13,48\%. 
Consequently, the MIP reformulation appears to be competitive with state-of-the-art decomposition methods.
We acknowledge that we did not re-implement and run the decomposition algorithms from \citep{Bruni2016,Bruni2018}. The comparison in terms of computation time of our compact reformulation and decomposition approaches is thus limited in significance. 
However the proposed approach is practically attractive since the implementation of a compact MIP formulation is much easier than the implementation of decomposition methods, where convergence issues may arise.

\subsubsection{Impact of PSPLib parameters}

Let us now comment on the impact of benchmark parameters NC, RF, and RS.
In Table~\ref{tab:paramG357}, we report for each value of parameter NC, RF and RS, the same entries as for Table~\ref{tab:G357} for all budgets $\Gamma =3,5,7$.\\

\begin{table}[H]
    \centering
    \footnotesize
    \begin{tabular}{r r @{\hspace{2cm}}c c c c}

&&	\#solved &	time(s) &	\#unsolved &	gap \\
\hline
NC & 1.5  & 328  & 42.83  & 152  & 18.78\%  \\
NC & 1.8  & 343  & 36.82  & 137  & 20.00\%  \\
NC & 2.1  & 355  & 54.92  & 125  & 18.65\%  \\
\hline
RF & 0.25  & 360  & 6.49  & 0  & -  \\
RF & 0.5  & 276  & 68.46  & 84  & 13.46\%  \\
RF & 0.75  & 218  & 67.38  & 142  & 20.29\%  \\
RF & 1  & 172  & 59.61  & 188  & 20.83\%  \\
\hline
RS & 0.2  & 107  & 88.05  & 253  & 27.99\%  \\
RS & 0.5  & 229  & 90.84  & 131  & 5.66\%  \\
RS & 0.7  & 330  & 42.11  & 30  & 3.48\%  \\
RS & 1  & 360  & 5.71  & 0  & -  \\

\end{tabular}
    \caption{Performance of $(\FormAdj)$ depending on PSPLib parameters for $n=30$ jobs.}
    \label{tab:paramG357}
\end{table}

\medbreak
The results show that parameter NC has a limited impact on the performance of the compact reformulation in terms of the number of instances solved, time and gap. By contrast, resource parameters RF and RS have an important impact, the hardest instances being for high RF and low RS. It corresponds to the instances where jobs use resources of different types (high RF) but instances are not highly disjunctive (low RS). Note that all instances with RF = 0.25 or RS = 1.0 are solved optimally; instances with RS $\geq$ 0.5 are solved with small final gap, 5.66\% on average.

\subsubsection{Overhead computational price of adjustable robustness}

In this section, we assess numerically the overhead computational effort that is necessary to solve the Adjustable-Robust RCPSP, in comparison with the deterministic RCPSP.
Our claim is that $(\FormAdj)$ inherits from the weakness of the flow formulation for the RCPSP.

\medbreak
Recall that the deterministic RCPSP corresponds to the case $\Gamma = 0$, and $(\FormAdj)$ coincides with the flow formulation for the RCPSP in that case. The 480 instances of the deterministic RCPSP are solved with the MIP formulation $(\FormAdj)$ and $\Gamma=0$.

In Figure~\ref{fig:solveVsTime}, the percentage of solved instances is represented depending on computation time, for the flow formulation relative to the deterministic RCPSP, and for the formulation $(\FormAdj)$ with budget $\Gamma=3,5,7$.

\begin{figure}[H]
\input{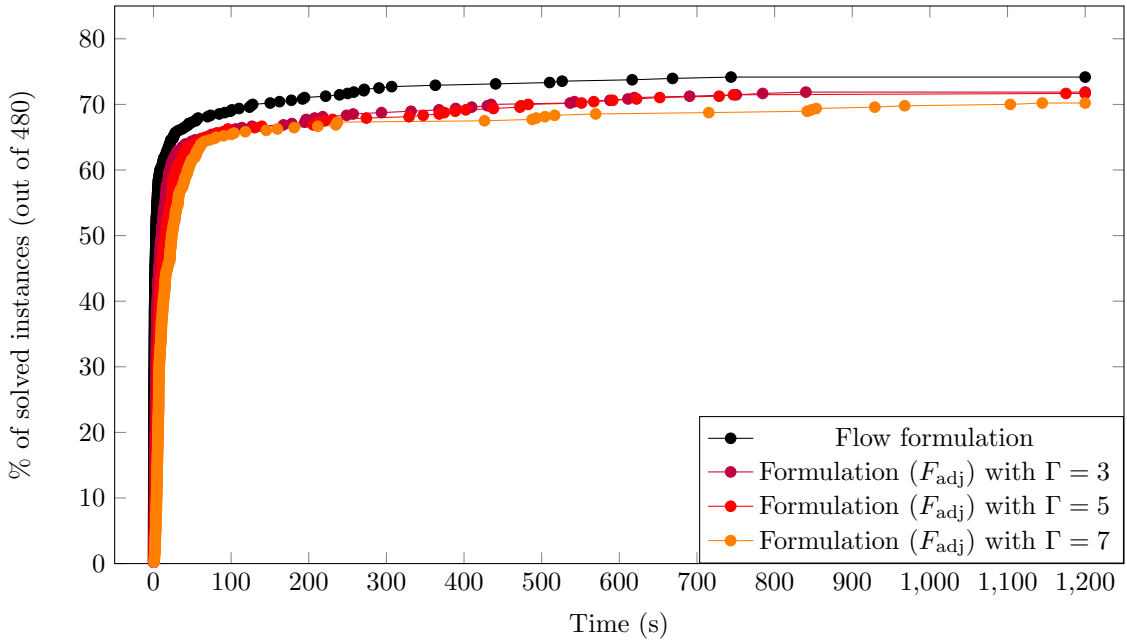}
\caption{Percentage of solved instances over time, for budgets $\Gamma=0,3,5,7$.}
\label{fig:solveVsTime}
\end{figure}

It can be observed that the proportion of solved instances is quite similar for $\Gamma=3,5,7$ or for the deterministic RCPSP. For all of them, 60\% of instances are solved within 100 seconds. Then a plateau is observed, since remaining instances are hard.
The overhead computation effort is represented by the offset of robust curves w.r.t. the deterministic one.

\subsection{Performance of $(\FormAdj)$ on larger instances}
\label{sec:res_adj_large}

In this section we provide insights on the scalability of the MIP when the number of jobs $n$ is increased.
The analysis from Section~\ref{sec:res_adj_small} led to the identification of parameters with the most influence on the performance of the MIP.
In particular instances with RF $\leq$ 0.5 and RS = 1 are the most efficiently solved for $n=30$ jobs. 
The MIP is solved for instances with such parameters and a larger number of jobs ($n = 30, 60, 90$).

In Table~\ref{tab:large} are reported the same entries as in Table~\ref{tab:G357} and a new one:\\
-- \#noSol: the number of instances where no feasible solution was found by MIP within the time limit.

\begin{table}[H]
    \footnotesize
    \centering
    \begin{tabular}{r r r r r r r}
n & $\Gamma$ & \#solved & time(s) & \#unsolved & gap & \#noSol\\
\hline
30 & 3  & 60  & 2.04  & 0  & - & 0 \\
30 & 5  & 60  & 3.69  & 0  & - & 0 \\
30 & 7  & 60  & 5.11  & 0  & - & 0 \\
\hline
60 & 3  & 60  & 57.76  & 0  & - & 0 \\
60 & 5  & 60  & 79.08  & 0  & - & 0 \\
60 & 7  & 60  & 104.14  & 0  & - & 0 \\
\hline
90 & 3  & 59  & 272.36  & 1  & 20.26\% & 0 \\
90 & 5 & 54 & 424.80 & 6 & 25.09\% & 0 \\
90 & 7 & 42 & 471.12 & 13 & 40.46\% & 5 \\
\end{tabular}
    \caption{Performance of $(\FormAdj)$ for the 60 instances with RF $\leq$ 0.5 and RS = 1.}
    \label{tab:large}
\end{table}

While instances with $n=30$ are very efficiently solved, instances with $n=90$ with the same parameters are not all solved within the time limit of 1200 seconds. Note that for five instances with $n=90$ and $\Gamma=7$, the MIP solver does not find a feasible solution. For such instances, no optimality gap is available.
The same behavior appears on all tested instances with $n=120$ jobs, for which results are unreported.
The results show that even for instance classes that are very easy for small size ($n=30$), the MIP fails for medium-size instances.

\section{Computational results: Heuristics for the Adju\-stable-Robust RCPSP}
\label{sec:num:heurAdj}

In this section, numerical performance of heuristics for the Adjustable-Robust RCPSP is investigated.

\subsection{Quality evaluation of priority rules}

Let us first evaluate priority rules on instances with $n=30$ jobs. A first question is the relevance of priority rules that were designed for the deterministic RCPSP; a second one is the relevance of applying the BestRule heuristic instead of only one priority rule.
Let us denote by $\QoptGamma$ the optimal value of the Adjustable-Robust RCPSP for budget $\Gamma$. Let us denote by $\QGammaHeur$ and $\QGammaBR$ the value of heuristic $\AdjGammaHeur$ and BestRule heuristic respectively.

\subsubsection{Correlation in quality between deterministic and robust setting}

Let us address the first question: does a good heuristic $\heurAlgo$ for the RCPSP yield a good heuristic $\AdjGammaHeur$ for the Adjustable-Robust RCPSP?

Consider the instances with $n=30$ jobs where the optimal values $\QoptGamma$, $\Gamma=0,3,5,7$ have been found by solving the MIP.
The question is to assess numerically the correlation between the gaps $\GapZero(\heurAlgo) = \frac{\QZeroHeur - \QoptZero}{\QZeroHeur}$ and $\GapGamma(\heurAlgo)=\frac{\QGammaHeur - \QoptGamma}{\QGammaHeur}$, i.e., the quality of $\heurAlgo$ for the RCPSP and the quality of $\AdjGammaHeur$ for the Adjustable-Robust RCPSP.

For budgets $\Gamma=3,5,7$ we consider the data points formed with the gaps $\GapZero(\heurAlgo)$ and $\GapGamma(\heurAlgo)$ on all solved instances, for each heuristic $\heurAlgo$ based on one of the 7 priority rules presented in Section~\ref{sec:priorityRules}.
In Table~\ref{tab:pearson} the Pearson correlation coefficient between these gaps, computed with Julia function \texttt{cor()}, is reported for budgets $\Gamma=3,5,7$.
(For each value of $\Gamma$, the number of points is thus seven times the number of instances solved for both budget $\Gamma$ and budget 0.)
In Figure~\ref{fig:plotsCorrel} we represent the data points in the 2D space with first axis $\GapZero$ and second axis $\GapGamma$.

\begin{table}[H]
        \footnotesize
        \centering

        \begin{tabular}{r r @{\hspace{2cm}} c }
$\Gamma$ & \#points & Correl. Coeff. between $\GapGamma$ and $\GapZero$  \\
\hline
3 &
2408
&
0.9882
\\
5 &
2387
& 
0.9797 
\\
7 & 
2345
& 
0.9838
\\
\end{tabular}
        \caption{Correlation coefficient between optimality gaps in deterministic and robust cases for instances with $n=30$ jobs.}
        \label{tab:pearson}
    \end{table}

\begin{figure}[H]
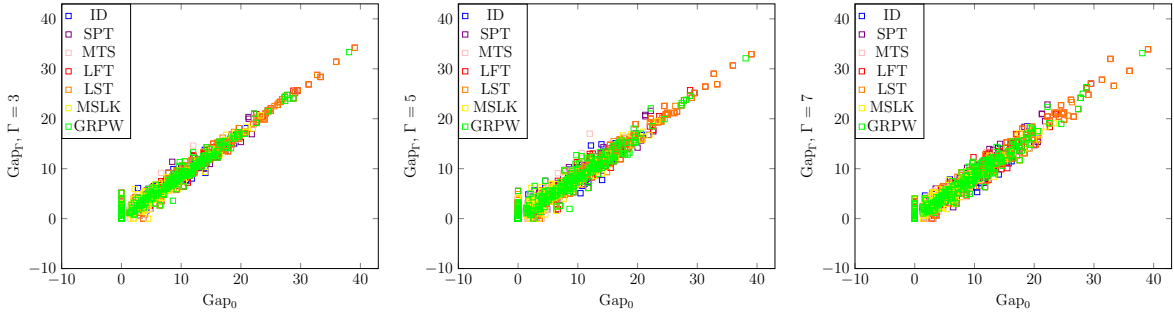

\centering
\scalebox{0.5}{
\begin{tabular}{c c c}
    \input{figures/plots/plot_correl_solved_G3}
    & \input{figures/plots/plot_correl_solved_G5}
    & \input{figures/plots/plot_correl_solved_G7}
    \\
\end{tabular}
}
\caption{ Heuristic solutions plotted in $(\GapZero, \GapGamma)$ space, for $\Gamma=3,5,7$, for instances with $n=30$ jobs.}
\label{fig:plotsCorrel}
\end{figure}

In Table~\ref{tab:pearson} correlation coefficients are around 0.98, which indicates an important correlation between $\GapZero$ and $\GapGamma$. Indeed uncorrelated data would lead to a coefficient correlation of 0; affinely related data would lead to a correlation coefficient of 1. This result justifies the use of priority rules that were studied in the literature for the minimization of the makespan in the deterministic RCPSP setting. 

\subsubsection{Relevance of BestRule}

We now investigate the second question, which is the relevance of choosing the best priority rule, rather than only one.\\
Given an instance and heuristic algorithm $\heurAlgo$, let \GapToBR denote the gap of $\heurAlgo$ to BestRule solution, defined as $\frac{\QGammaHeur - \QGammaBR}{\QGammaBR}$.
In Table~\ref{tab:heur_nbBest} are reported for each priority rule based algorithm $\heurAlgo$:\\
-- \#isBR: the number of instances (over 1440) for which the priority rule gives the best value, i.e., $\QGammaHeur = \QGammaBR$;\\
-- \GapToBR avg: the average value of \GapToBR on all instances;\\
-- \GapToBR max: the maximum value of \GapToBR on all instances.\\

  \begin{table}[H]
     \centering
        \footnotesize
        \begin{tabular}{c c c c}
Priority rule & \#isBR & \GapToBR avg & \GapToBR max \\
 \hline
 ID & 703 & +2.71\% & +37.00\% \\
 SPT & 540 & +4.85\% & +31.57\% \\
 MTS & 959 & +1.17\% & +13.40\% \\
 LFT & 381 & +8.38\% & +50.00\% \\
 LST & 398 & +8.32\% & +50.00\% \\
 MSLK & 866 & +2.48\% & +29.46\% \\
 GRPW & 446 & +6.27\% & +48.27\% \\
 \end{tabular}
        \caption{Quality of priority rules w.r.t. BestRule on the 1440 instances with $n=30$ jobs.}
        \label{tab:heur_nbBest}
    \end{table}

It can be observed that every priority rule is the best priority rule for some instances (at least 381 out of 1440 for LFT). Priority rules MTS and MSLK have the best results in terms of \#isBR and \GapToBR avg values. However, the makespan given by MTS is up to +13.4\% more than the BestRule makespan for some instances, as shown by \GapToBR max. The BestRule heuristic thus has a better performance in average on the benchmark.

\subsection{Performance of BestRule heuristic}

Let us now present insights on the performance of the BestRule heuristic, in terms of computation time and solution quality.\\
First the BestRule heuristic is evaluated on all instances, i.e., with $n=30,60,90,120$ jobs. 
Let BR denote the value of BestRule heuristic.
Since some of these instances are difficult to solve through the MIP formulation, the linear bound Rel (i.e., the optimal value of the continuous relaxation of the MIP) is used as a lower bound for the problem.

\medbreak
In Table~\ref{tab:heurLarge} are given, for each value of $n=30,60,90,120$ and $\Gamma=3,5,7$:\\
-- BR time: the computation time for BestRule heuristic;\\
-- Gap: the gap $\frac{BR-Rel}{BR}$ of BestRule to the linear bound Rel;\\
-- Rel time: the computation time for the linear bound Rel.\\

\begin{table}[H]
    \centering
    \footnotesize
    \begin{tabular}{r r r r r}
$n$ & $\Gamma$ & BR time (s) & Gap $\frac{BR-Rel}{BR}$ & Rel time (s) \\
\hline
30 & 3 & 0.11  & 10.33\% & 0.15  \\
30 & 5 & 0.11  & 10.05\% & 0.17  \\
30 & 7 & 0.12  & 10.37\% & 0.21  \\
\hline
60 & 3 & 0.21  & 9.54\% & 0.58  \\
60 & 5 & 0.27  & 9.24\% & 0.73  \\
60 & 7 & 0.28  & 9.15\% & 0.94  \\
\hline
90 & 3 & 0.40  & 9.18\% & 1.59  \\
90 & 5 & 0.32  & 9.03\% & 1.87  \\
90 & 7 & 0.48  & 8.97\% & 2.42  \\
\hline
120 & 3 & 0.47  & 21.62\% & 4.86  \\
120 & 5 & 0.67  & 21.00\% & 5.33  \\
120 & 7 & 0.73  & 20.66\% & 6.50  \\

\end{tabular}
    \caption{Performance of BestRule heuristic for all instances.}
    \label{tab:heurLarge}
\end{table}

The computation time of the BR heuristic is below 1 second, making it applicable to large instances with up to 120 jobs. By contrast, the time for solving the continuous relaxation of the MIP with Cplex grows faster, reaching more than 5 seconds for the instances with 120 jobs. The results provide a good signal for the scalability of the BestRule heuristic, in comparison with methods based on MIP solving.

The gap between BR and the linear bound Rel is around 10\% for instances with $n\leq 90$ jobs, but goes to more than 20\% for $n=120$ jobs. This gap is affected both by the quality of the BR heuristic and the weakness of the lower bound; in the sequel BestRule is compared to the optimal value of the problem, when known.

\vspace{1cm}

Let us now compare BestRule to solutions of $(\FormAdj)$. We distinguish between instances solved to optimality, and instances not solved to optimality where only a lower bound is known.
Let us denote by OPT the optimal value, MIPSol the value of the best solution and LB the best lower bound found by Cplex when solving $(\FormAdj)$. The gap returned by Cplex is thus $\frac{MIPSol-LB}{MIPSol}$.

\medbreak
Results are reported in Table~\ref{tab:BR_perf_30} for all $n=30$ instances, and in Table~\ref{tab:BR_perf_large_RSRF} for instances with $n=30,60,90$ and RF $\leq$ 0.5, RS = 1 for which MIP results were previously analyzed.
In both tables, we report for budget $\Gamma=3,5,7$:\\
-- for all instances:\\
\hspace*{1cm} -- BR gap: the gap $\frac{BR - LB}{BR}$ of BestRule to lower bound LB;\\
\hspace*{1cm} -- MIP gap: the gap $\frac{MIPSol - LB}{MIPSol}$ obtained by MIP solving;\\
-- for instances solved optimally by MIP:\\
\hspace*{1cm} -- \#instances: the number of such instances;\\
\hspace*{1cm} -- BR gap: the gap $\frac{BR - OPT}{BR}$ of BestRule to optimal value OPT;\\
\hspace*{1cm} -- \#(BR=OPT): the number of instances where BestRule heuristic returns an optimal solution.\\
-- for instances not solved optimally by MIP:\\
\hspace*{1cm} -- BR gap: the gap $\frac{BR - LB}{BR}$ of BestRule to lower bound LB;\\
\hspace*{1cm} -- MIP gap: the gap $\frac{MIPSol - LB}{MIPSol}$ obtained by MIP solving;\\
\hspace*{1cm} -- \#(BR$<$MIPSol) : the number of instances where BestRule heuristic returns a solution that is better than the best incumbent found by MIP;\\
\hspace*{1cm} -- \#noMIPSol : the number of instances where no incumbent was found by MIP.\\

\begin{table}[H]
	\begin{adjustwidth}{-2cm}{-2cm}
    \centering
    \footnotesize
    
\begin{tabular}{r r r r r r c r r r c c }
 && \multicolumn{2}{c}{all instances} &
 \multicolumn{3}{c}{instances solved by MIP} & 
 \multicolumn{4}{c}{instances not solved by MIP} \\
$n$ & $\Gamma$ & BR gap & MIP gap & \#inst & BR gap & \#(BR=OPT) & \#inst & BR gap & MIP gap & \#(BR$<$MIPSol) & \#noMIPSol\\ 
 \hline
30 & 3  & 7.42\% &  5.41\%  & 341 & 1.49 \% & 201  & 139 & 21.97 \% & 18.71 \% & 17 & 0\\
30 & 5  & 7.36\% &  5.38\%  & 342 & 1.49 \% & 196  & 138 & 21.89 \% & 18.74 \% & 23 & 0\\
30 & 7  & 7.54\% &  5.71\%  & 335 & 1.42 \% & 199  & 145 & 21.69 \% & 18.90 \% & 27  & 0\\
\end{tabular}
    \caption{Performance of BestRule heuristic for instances with $n=30$ jobs.}
    \label{tab:BR_perf_30}
	\end{adjustwidth}
\end{table}

\begin{table}[H]
	\begin{adjustwidth}{-2cm}{-2cm}
    \centering
    \footnotesize
    
\begin{tabular}{r r r r r r c r r r c c}
 && \multicolumn{2}{c}{all instances} &
 \multicolumn{3}{c}{instances solved by MIP} & 
 \multicolumn{4}{c}{instances not solved by MIP} \\
$n$ & $\Gamma$ & BR gap & MIP gap & \#inst & BR gap & \#(BR=OPT) & \#inst & BR gap & MIP gap & \#(BR$<$MIPSol) & \#noMIPSol\\ 
 \hline
30 & 3  & 0.14\% &  0.0\%  & 60 & 0.14 \% & 55  & 0 & - & - & - & 0 \\
30 & 5  & 0.16\% &  0.0\%  & 60 & 0.16 \% & 55  & 0 & - & - & - & 0 \\
30 & 7  & 0.08\% &  0.0\%  & 60 & 0.08 \% & 56  & 0 & - & - & - & 0 \\
\hline
60 & 3  & 0.06\% &  0.0\%  & 60 & 0.06 \% & 57  & 0 & - & - & - & 0 \\
60 & 5  & 0.08\% &  0.0\%  & 60 & 0.08 \% & 55  & 0 & - & - & - & 0 \\
60 & 7  & 0.12\% &  0.0\%  & 60 & 0.12 \% & 50  & 0 & - & - & - & 0 \\
\hline
90 & 3  & 0.08\% &  0.33\%  & 59 & 0.07 \% & 51  & 1 & 0.54 \% & 20.26 \% & 1 & 0 \\
90 & 5  & 0.19\% &  2.50\%  & 54 & 0.17 \% & 45  & 6 & 0.39 \% & 25.09 \% & 6 & 0 \\
90 & 7  & 0.27\% &  9.56\%  & 42 & 0.23 \% & 33  & 13 & 0.39 \% & 40.46 \% & 13 & 5 \\

\end{tabular}
    \caption{Performance of BestRule heuristic for instances with RF $\leq$ 0.5, RS = 1 for $n=30, 60, 90$ jobs.}
    \label{tab:BR_perf_large_RSRF}
    \end{adjustwidth}
\end{table}

In Table~\ref{tab:BR_perf_30} it can be observed that the BR gap is at most 7.54\% on average; BR gap is around 2\% larger than the MIP gap.
For instances where OPT is known, BR gap is even smaller (at most 1.49\%). Notably, BestRule outputs an optimal solution on a large subset of instances: 596 out of 1440.
For instances where OPT is not known, the BR gap is also close to MIP gap. On 67 instances, BestRule outputs a solution which is better than the best incumbent found by MIP solving after 1200 seconds computation.

In Table~\ref{tab:BR_perf_large_RSRF}, similar results are obtained
for instances with $n=30, 60$ jobs, where OPT is known for all instances. The BR gap is very small (at most 0.14\% on average), and BestRule gives an optimal solution for 328 instances out of 360.
For $n=90$ jobs, not all instances are solved by MIP. 
The BR gap is still very small for instances solved (at most 0.23\% on average), and BestRule gives an optimal solution for 130 instances out of 180.
For instances where OPT is not known, the BestRule heuristic is very efficient (BR gap $\leq$ 1\%), while MIP gap is large (more than 20\%). The BestRule is better than the best MIP solution on all these instances. Note also that for $\Gamma=7$, there are 5 instances where no MIP solution was known, while BestRule does provide a solution. 
The good performance of BestRule on unsolved instances results into a very good performance on average on all instances: e.g., for $\Gamma=7$, BR gap is only 0.27\% versus a MIP gap of 9.56\%.

\section{Computational results: MIP for Anchor-Robust RCPSP}
\label{sec:num:exactAnch}

In this section, the compact MIP reformulation $(\FormAnch)$ for Anchor-Robust RCPSP is evaluated. The impact of deadline $M$ on the number of anchored jobs is also discussed.

\subsection{Instances and settings}

Let us describe the settings of numerical experiments.\\
The Anchor-Robust RCPSP instances are defined as follows.
The instances of the RCPSP and uncertainty sets are the same as in Section~\ref{sec:num:exactAdj}. The only additional input of the Anchor-Robust RCPSP is the deadline $M$. As proposed in Section~\ref{sec:models}, the deadline $M$ can be found by solving the Adjustable-Robust RCPSP. Since the Adjustable-Robust RCPSP is difficult to solve even for some instances with $n=30$ jobs, we consider in the sequel that deadline $M$ is set to the value $\deadlineBR = \QGamma(BR)$ of BestRule heuristic.
The BestRule heuristic is thus applied on each instance to determine $\deadlineBR$. The solution found by BestRule heuristic is a sequencing decision $\sigmaHeur$ with worst-case makespan $\QGamma(\sigmaHeur) = \deadlineBR$. The sequencing decision $\sigmaHeur \in \Sigmas$ is fed to $(\FormAnch)$ as a warm-start. Note that this gives a feasible solution to the MIP solver, with $h_i=0$ for every $i \in J$.

Computational settings are the same as in Section~\ref{sec:num:exactAdj}. The time limit for MIP solving is set to 1200 seconds. The computation times reported in this section do not include the computation time of BestRule heuristic, which is negligible w.r.t. MIP solving.

\subsection{Performance of $(\FormAnch)$ on small instances}

Let us first assess the performance of the compact reformulation $(\FormAnch)$ for instances with $n=30$ jobs, and analyze the influence of parameters.

In Table~\ref{tab:anch_budget} and Table~\ref{tab:anch_param} are reported, for each value of $\Gamma=3,5,7$ and for each value of parameter NC, RF and RS respectively:\\
-- Anch: the average number of anchored jobs in the best solution found by $(\FormAnch)$;\\
-- \#solved: the number of instances solved to optimality within time limit;\\
-- time: the computation time averaged on instances solved to optimality, in seconds;\\
-- \#unsolved: the number of instances not solved to optimality within time limit;\\
-- gap: the final gap averaged on instances not solved to optimality. It equals $\frac{UB - Anch}{Anch}$ where $UB$ is the best upper bound found by MIP;\\
-- \gapn: the final gap $\frac{UB - Anch}{n}$ expressed as a percentage of the number of jobs.\\

\begin{table}[H]
    \centering
    \footnotesize
    \begin{tabular}{r r@{\hspace{1cm}} r r r r r r r}

$\Gamma$ & Anch &  \#solved & time (s) & \#unsolved & gap & \gapn\\
\hline
 3  & 16.09  & 307  & 32.76  & 173 & 244.86\%  & 53.78\%  \\
 5  & 17.92  & 321  & 25.42  & 159 & 261.78\%  & 55.10\%  \\
 7  & 23.01  & 372  & 34.69  & 108 & 237.08\%  & 56.08\%  \\
\hline
 all  & 19.01  & 1000  & 31.12  & 440 & 249.06\%  & 54.82\%  \\

\end{tabular}
    \caption{Performance of $(\FormAnch)$ for $n=30$ jobs, w.r.t. budget $\Gamma$.}
    \label{tab:anch_budget}
\end{table}

\begin{table}[H]
    \centering
    \footnotesize
    \begin{tabular}{r r r @{\hspace{1cm}} r r r r r r}

& & Anch  & \#solved & time (s) & \#unsolved & gap & \gapn\\
\hline
NC & 1.5  & 20.63  & 340  & 28.97  & 140 & 223.59\%  & 53.53\%  \\
NC & 1.8  & 18.76  & 332  & 27.09  & 148 & 272.88\%  & 56.24\%  \\
NC & 2.1  & 17.62  & 328  & 37.44  & 152 & 249.34\%  & 54.63\%  \\
\hline
RF & 0.25  & 21.58  & 359  & 1.83  & 1 & 25.82\%  & 16.35\%  \\
RF & 0.5  & 19.88  & 261  & 32.11  & 99 & 126.86\%  & 48.33\%  \\
RF & 0.75  & 17.82  & 199  & 55.69  & 161 & 243.28\%  & 51.99\%  \\
RF & 1  & 16.74  & 181  & 60.8  & 179 & 323.1\%  & 61.17\%  \\
\hline
RS & 0.2  & 12.67  & 98  & 36.53  & 262 & 356.03\%  & 68.84\%  \\
RS & 0.5  & 20.12  & 232  & 74.92  & 128 & 106.01\%  & 38.43\%  \\
RS & 0.7  & 21.25  & 310  & 24.66  & 50 & 54.78\%  & 23.31\%  \\
RS & 1  & 21.98  & 360  & 6.99  & 0 & - & -  \\
\end{tabular}
    \caption{Performance of $(\FormAnch)$ for $n=30$, w.r.t. PSPLib parameters.}
    \label{tab:anch_param}
\end{table}

Let us first comment on the optimal value Anch of the problem. As reported in Table~\ref{tab:anch_budget}, on average around 19 jobs out of 30 can be anchored in a baseline schedule within the makespan $\deadlineBR$.
This highlights that a large proportion of jobs can be anchored, while respecting the worst-case makespan $\deadlineBR$ obtained in first place. The impact of the choice for deadline $\deadlineBR$ is discussed in Section~\ref{sec:deadline}.

\medbreak
Let us now comment on performance of the compact reformulation.
First $(\FormAnch)$ allows for solving 1000 instances out of 1440 to optimality. For instances that are not solved to optimality, the value Anch can be very small, which results into a very large gap (264\% on average). Recall that Anch appears as denominator in the gap formula. The \gapn\ value is more suitable for interpretation. The average \gapn\ value is 54.82\%, thus showing the poor performance of $(\FormAnch)$ on unsolved instances.

Table~\ref{tab:anch_budget} shows that the uncertainty budget has an impact on the number of instances solved: by contrast with the Adjustable-Robust RCPSP, for high budget $\Gamma=7$, more instances are solved to optimality. This is interpreted in Section~\ref{sec:anch_large}.

Regarding the impact of PSPLib parameters reported in Table~\ref{tab:anch_param}, it appears that the same instance subsets are hard for both the Anchor-Robust RCPSP and the Adjustable-Robust RCPSP. Namely, all instances with RF = 0.25 except one are solved to optimality, and all instances with RS = 1 are solved to optimality. By contrast parameter NC has no significant impact on performance.

\subsection{Performance of $(\FormAnch)$ on larger instances}
\label{sec:anch_large}

Let us now examine the scalability of the MIP formulation $(\FormAnch)$ when the number of jobs $n$ is increased.
Similarly to Section~\ref{sec:res_adj_large}, we restrict our attention to instances with RF $\leq$ 0.5 and RS = 1 that are efficiently solved for $n=30$ jobs. The MIP formulation $(\FormAnch)$ is solved for instances with RF $\leq$ 0.5, RS = 1 and $n = 30, 60, 90$ jobs. In Table~\ref{tab:anch_large} are reported the same entries as in Table~\ref{tab:anch_budget}.

\begin{table}[H]
    \centering
    \footnotesize
    \begin{tabular}{r r r r r r r r}

& & Anch & \#solved & time (s) & \#unsolved & gap & \gapn\\
\hline
30 & 3  & 19.78  & 60  & 0.25  & 0  & -  \\
30 & 5  & 21.28  & 60  & 0.51  & 0  & -  \\
30 & 7  & 24.48  & 60  & 0.47  & 0  & -  \\
\hline
60 & 3  & 38.11  & 57  & 16.79  & 3  & 6.13\%  \\
60 & 5  & 40.38  & 59  & 25.17  & 1  & 2.77\%  \\
60 & 7  & 44.25  & 59  & 6.00  & 1  & 4.76\%  \\
\hline
90 & 3  & 61.68  & 54  & 54.33  & 6  & 7.46\%  \\
90 & 5  & 65.41  & 59  & 76.28  & 1  & 4.91\%  \\
90 & 7  & 70.61  & 60  & 76.30  & 0  & -  \\

\end{tabular}
    \caption{Performance of $(\FormAnch)$ for instances with RF $\leq$ 0.5 and RS = 1.}
    \label{tab:anch_large}
\end{table}

For $n=60$ and $n=90$ jobs, some instances are not solved to optimality within 1200 seconds. Note that for the Adjustable-Robust RCPSP, all instances with $n=60$ jobs were solved, as reported previously in Table~\ref{tab:large}.
Computation time for $n=90$ instances is lower than that for the Adjustable-Robust RCPSP on the same instances. This is due to the warm-start, which gives a feasible sequencing decision matching the deadline to $(\FormAnch)$. The warm-start prevents $(\FormAnch)$ from finding no feasible solution.

Contrary to the Adjustable-Robust RCPSP, instances for larger $\Gamma$ are solved more easily. In particular, the number of unsolved instances decreases with $\Gamma$.
This can be surprising since the size of the MIP formulation increases with $\Gamma$. Note that the average number of anchored jobs Anch is higher for $\Gamma=7$. Hence a large part of anchoring variables can be quickly set to one by the solver, and a small subset of anchoring variables is left for optimization. This yields simpler instances.

\subsection{Impact of the deadline}
\label{sec:deadline}

Let us finally discuss the impact of the chosen deadline $M$. We proposed to choose as deadline the output $\deadlineBR$ of the BestRule heuristic, and we observed the number of jobs that can be anchored. The deadline may also be increased, so that more jobs can be anchored. A question is to evaluate the impact of an increase of the deadline w.r.t. the number of jobs that can be anchored.

To that end, the Anchor-Robust RCPSP is solved for deadline $\beta \deadlineBR$, with $\beta$ a scaling factor ranging in $\{1.0, 1.05, 1.1,$ $1.2, 1.3\}$. Instances with RF = 0.25, RS = 1 are considered, for which the optimal value of the Anchor-Robust RCPSP is efficiently computed. In Table~\ref{tab:anch_deadline} are reported, for each value of the deadline $\beta \deadlineBR$, the optimal number of anchored jobs. Each line corresponds to an average over the 30 instances, for budget $\Gamma=3$. The last line corresponds to the average for $n=30,60,90$, the value being given as a percentage of anchored jobs.

\begin{table}[H]
    \centering
    \footnotesize
    \begin{tabular}{c @{\hspace{1cm}}c c c c c}
 & \multicolumn{5}{c}{Scaling factor $\beta$ applied to deadline} \\
$n$ & 1.0 & 1.05 & 1.1 & 1.2 & 1.3 \\ 
\hline
 30  & 18.39  & 21.40  & 24.26  & 28.73  & 30.0  \\
 60  & 39.36  & 44.60  & 49.10  & 55.56  & 59.9  \\
 90  & 59.83  & 67.63  & 74.76  & 83.73  & 89.2  \\
 \hline
all  & 64.47\%  & 73.6\%  & 81.93\%  & 93.8\%  & 99.64\%  \\

\end{tabular}
    \caption{Optimal number of anchored jobs depending on the deadline $\beta \deadlineBR$, for $\Gamma=3$ and RF = 0.25, RS = 1.}
    \label{tab:anch_deadline}
\end{table}

As noted previously, a large proportion of jobs can be anchored even for deadline $\deadlineBR$: around 65\% of jobs. A slight increase of the deadline, e.g., an increase of +5\%, allows for an increase of +10\% of the number of anchored jobs. Finally, for a large increase of the makespan (+30\%) on almost all instances all jobs are anchored, meaning that deadline 1.3$\deadlineBR$ is more than the makespan of a static-robust solution.

This highlights the interest of using Anchor-Robust RCPSP to find a baseline schedule, with the opportunity of tuning the deadline value. A trade-off can be found between the deadline and the number of anchored jobs.

\section{Computational results: Heuristic for Anchor-Robust RCPSP}
\label{sec:num:heurAnch}

In this section, we evaluate the BestRuleSequence (BRS) heuristic to solve hard instances of the Anchor-Robust RCPSP.

\subsection{Scalability of BRS heuristic}

We first evaluate the performance of BRS heuristic on larger instances. The MIP for BRS is solved under the same computational settings as $(\FormAnch)$ studied in Section~\ref{sec:num:exactAnch}. The fixed sequencing decision is $\sigmaHeur$ given by BestRule heuristic, and the deadline is $\deadlineBR = \QGamma(\sigmaHeur)$. Recall that the MIP formulation for BRS heuristic does not have any binary variable for the sequencing decision, but only anchoring binary variables and continuous schedule variables.

In Table~\ref{tab:anch_BRS_large} are reported, for each value of $n = 30, 60, 90, 120$ and $\Gamma = 3,5,7$:\\
-- AnchBRS: the average number of anchored jobs obtained by BRS;\\
-- AnchBRS(\%): AnchBRS expressed as a percentage of $n$;\\
-- BR time: the average time for BestRule heuristic, in seconds;\\
-- BRS time: the average time for BestRuleSequence heuristic, in seconds;\\
-- Total time: the sum of BR time and BRS time, in seconds.

\begin{table}[H]
    \centering
    \footnotesize
    \begin{tabular}{r r @{\hspace{1cm}} r r r r r}
$n$ & $\Gamma$ & AnchBRS & AnchBRS(\%) & BR time & BRS time & Total time \\
\hline
30 & 3  & 9.19 & 30.63\%  & 0.11  & 0.01  & 0.13  \\
30 & 5  & 11.47 & 38.26\%  & 0.11  & 0.01  & 0.12  \\
30 & 7  & 16.65 & 55.51\%  & 0.12  & 0.01  & 0.13  \\
\hline
60 & 3  & 13.01 & 21.69\%  & 0.21  & 0.01  & 0.22  \\
60 & 5  & 14.75 & 24.58\%  & 0.27  & 0.01  & 0.28  \\
60 & 7  & 19.26 & 32.10\%  & 0.28  & 0.01  & 0.30  \\
\hline
90 & 3  & 16.39 & 18.22\%  & 0.40  & 0.01  & 0.42  \\
90 & 5  & 17.72 & 19.69\%  & 0.32  & 0.02  & 0.35  \\
90 & 7  & 21.49 & 23.88\%  & 0.48  & 0.02  & 0.51  \\
\hline
120 & 3  & 12.30 & 10.25\%  & 0.47  & 0.01  & 0.49  \\
120 & 5  & 12.74 & 10.61\%  & 0.67  & 0.02  & 0.69  \\
120 & 7  & 14.75 & 12.29\%  & 0.73  & 0.03  & 0.76  \\
\end{tabular}
    \caption{Performance of BRS heuristic for all instances.}
    \label{tab:anch_BRS_large}
\end{table}

It can be observed first, that even if BRS is based on solving an MIP, it is solved extremely efficiently with Cplex: the order of magnitude is 10 milliseconds. The program is often solved at presolve or root node.
A connection can be made with results for exact solving of the Anchor-Robust RCPSP presented in Section~\ref{sec:num:exactAnch}.
This shows that the computational effort necessary to solve Anchor-Robust RCPSP is mainly due to sequencing decision variables: for fixed sequencing decision the problem is very easily handled through MIP.

Total time (to run BestRule heuristic and find $\deadlineBR$, then run BRS heuristic to find an associated baseline schedule and anchored jobs) is less than a second on all considered instances, proving the scalability of BRS heuristic.

Finally the number of anchored jobs found with BRS is high for small instances: up to 55\% for $n=3$ and $\Gamma=7$. It drops to 10\% for instances with $n=120$ jobs. This raises the question of evaluating BRS heuristic w.r.t. the optimal value of the Anchor-Robust RCPSP.

\subsection{Comparison to exact solution of the Anchor-Robust RCPSP}

Let us compare the value AnchBRS obtained by BRS heuristic to the optimal value of the Anchor-Robust RCPSP. Let Anch denote the best value found by MIP, and UB the best upper bound found by MIP.

Results are reported in Table~\ref{tab:BR_perf_30} for all $n=30$ instances, and in Table~\ref{tab:BR_perf_large_RSRF} for instances with $n=30,60,90$ and RF $\leq$ 0.5, RS = 1 for which MIP results were previously analyzed.
In both tables, we report for budget $\Gamma=3,5,7$:\\
-- for all instances:\\
\hspace*{1cm} -- BRS \gapn: the gap $\frac{UB - AnchBRS}{n}$ of BestRuleSequence to upper bound UB;\\
\hspace*{1cm} -- MIP \gapn: the gap $\frac{UB - MIPSol}{n}$ obtained by MIP solving;\\
-- for instances solved optimally by MIP:\\
\hspace*{1cm} -- \#instances: the number of such instances;\\
\hspace*{1cm} -- BRS \gapn: the gap $\frac{OPT - AnchBRS}{n}$ of BestRuleSequence to optimal value OPT;\\
\hspace*{1cm} -- \#(BRS=OPT): the number of instances where BestRuleSequence heuristic returns an optimal solution.\\
-- for instances not solved optimally by MIP:\\
\hspace*{1cm} -- BRS \gapn: the gap $\frac{UB - AnchBRS}{n}$ of BestRuleSequence to upper bound UB;\\
\hspace*{1cm} -- MIP \gapn: the gap $\frac{UB - MIPSol}{n}$ obtained by MIP solving.\\

\begin{table}[H]
	\begin{adjustwidth}{-1.5cm}{-1.5cm}    
    \centering
    \footnotesize
    \begin{tabular}{r r r r r r c r r r}
 && \multicolumn{2}{c}{all instances} &
 \multicolumn{3}{c}{instances solved by MIP} & 
 \multicolumn{3}{c}{instances not solved by MIP} \\
$n$ & $\Gamma$ & BRS \gapn & MIP \gapn & \#inst & BRS \gapn & \#(BRS=OPT) & \#inst & BRS \gapn & MIP \gapn \\ 
 \hline
30 & 3  & 42.39\% &  19.38\%  & 307 & 25.38 \% & 0  & 173 & 72.57 \% & 53.78 \%  \\
30 & 5  & 39.73\% &  18.25\%  & 321 & 21.48 \% & 48  & 159 & 76.57 \% & 55.10 \%  \\
30 & 7  & 33.81\% &  12.61\%  & 372 & 20.51 \% & 136  & 108 & 79.62 \% & 56.08 \%  \\ 
 
 \end{tabular}
    \caption{Performance of BRS heuristic for instances with $n=30$ jobs.}
    \label{tab:anch_BRS_perf_30}
    \end{adjustwidth}
\end{table}

\begin{table}[H]
	\begin{adjustwidth}{-1.5cm}{-1.5cm}    
    \centering
    \footnotesize
    \begin{tabular}{r r r r r r c r r r}
 && \multicolumn{2}{c}{all instances} &
 \multicolumn{3}{c}{instances solved by MIP} & 
 \multicolumn{3}{c}{instances not solved by MIP} \\
$n$ & $\Gamma$ & BRS \gapn & MIP \gapn & \#inst & BRS \gapn & \#(BRS=OPT) & \#inst & BRS \gapn & MIP \gapn \\ 
\hline
30 & 3  & 17.05\% &  0\%  & 60 & 17.05 \% & 0  & 0 & - & -  \\
30 & 5  & 7.44\% &  0\%  & 60 & 7.44 \% & 23  & 0 & - & -  \\
30 & 7  & 3.05\% &  0\%  & 60 & 3.05 \% & 40  & 0 & - & -  \\
\hline
60 & 3  & 24.33\% &  0.16\%  & 57 & 23.53 \% & 1  & 3 & 39.44 \% & 3.33 \%  \\
60 & 5  & 23.19\% &  0.02\%  & 59 & 22.99 \% & 2  & 1 & 35.00 \% & 1.66 \% \\
60 & 7  & 19.88\% &  0.05\%  & 59 & 19.80 \% & 8  & 1 & 25.00 \% & 3.33 \%  \\
\hline
90 & 3  & 31.72\% &  0.42\%  & 54 & 30.32 \% & 0  & 6 & 44.25 \% & 4.25 \%  \\
90 & 5  & 32.98\% &  0.05\%  & 59 & 32.54 \% & 0  & 1 & 58.88 \% & 3.33 \%  \\
90 & 7  & 29.77\% &  0\%  & 60 & 29.77 \% & 0  & 0 & - & -  \\
 
 \end{tabular}
    \caption{Performance of BRS heuristic for instances with RF $\leq$ 0.5, RS = 1.}
    \label{tab:anch_BRS_perf_RSRF}
    \end{adjustwidth}
\end{table}

Table~\ref{tab:anch_BRS_perf_30} shows that BRS \gapn\ is very large, e.g., 42.39\% for instances with $n=30$ jobs and $\Gamma=3$. The gap is also large for instances solved to optimality, although the BRS heuristic provides an optimal solution to the Anchor-Robust RCPSP for 184 instances out of 1440.

Table~\ref{tab:anch_BRS_perf_RSRF} shows that the BRS heuristic has good performance for small instances with easy RF and RS: namely BRS \gapn\ is 3.05\% for instances with $n=30$ and $\Gamma=7$. For larger instances, BRS \gapn\ is large although the MIP \gapn\ is very small.

Hence the performance in terms of solution quality of BRS heuristic is limited, except for small, easy instances, and high budget.
Note that it implies the following on the Anchor-Robust RCPSP. While the worst-case makespan $\deadlineBR$ may be associated to a sequencing decision $\sigmaHeur$, there exists other sequence decisions respecting the deadline $\deadlineBR$, and such that many more jobs can be anchored. Hence, restricting to the sequencing decision $\sigmaHeur$ leads to BRS heuristic, which has poor quality but is very efficient in terms of computation time.

\newpage
\section{Conclusion}

In the present work, the new concept of anchor-robustness for the RCPSP was introduced. The definition of anchored solutions encapsulates the definition of solutions of the adjustable-robust approach. It offers a guarantee of starting times, in addition to a guarantee of the worst-case makespan. Anchor-robustness bridges the gap between static robustness and adjustable robustness from the literature.

We extended a graph model designed for PERT scheduling and obtained results for the Anchor-Robust and Adjustable-Robust RCPSP, leading to both exact and heuristic approaches.
Regarding exact solution approaches, we obtained the existence of compact reformulations for both problems.
For heuristics, we showed how to benefit from efficient heuristics for the RCPSP to design efficient heuristics for Anchor-Robust and Adjustable-Robust RCPSP. Altogether this is a complete toolbox for solving the Anchor-Robust and the Adjustable-Robust RCPSP.

A numerical evaluation of the proposed tools was performed. The compact reformulation for the Adjustable-Robust RCPSP is competitive with decomposition algorithms from the literature. We studied the scalability of MIPs, and identified hard instances. It turns out that these are the same as for the RCPSP, where optimizing the resource sequencing decision is computationally challenging.
To address such hard instances, the proposed heuristics are very efficient. 
The combination of BestRule and BestRuleSequence heuristics provides a robust makespan and baseline schedule with anchored jobs.

\medbreak
An important perspective is to improve MIP formulations. As illustrated in numerical experiments, the computational difficulty mainly arises from the binary variables for the sequencing decision.
A question is to improve the flow formulation for the RCPSP, e.g., with valid inequalities. This would surely improve the compact reformulations we proposed for the Adjustable-Robust and Anchor-Robust RCPSP.
The efficiency of heuristics also raises the question of  combining heuristics and MIP solving, beyond the mere warm-start we proposed.
The anchor-robust approach can also be extended to allow for sequencing decision revision in second stage. The sequencing decision may be partially adapted after uncertainty is known. The question of finding exact and heuristic approaches for this new problem would be an interesting research perspective.

\appendix
\section{Extended computational results}

Let us present complete computational results for solving the Adjustable-Robust RCPSP with MIP formulation $(\FormAdj)$.
In Tables~\ref{tab:G3},\ref{tab:G5},\ref{tab:G7} are reported the results for $\Gamma = 3,5,7$ respectively. For each instance class j30x, x $\in \{1, \dots, 48\}$, we indicate for $(\FormAdj)$:\\
-- the corresponding parameters NC, RF, RS;\\
-- solved: the number of instances solved to optimality (out of 10);\\
-- time: the computation time averaged on instances solved to optimality in seconds;\\
-- unsolved = 10 - solved\\
-- gap: the final gap, averaged on instances not solved to optimality.

\begin{table}[H]
    \centering
    \footnotesize
    \begin{tabular}{r r r r@{\hspace{1cm}}c r c r}
class & NC & RF & RS & solved & time(s) & unsolved & gap \\
\hline
1 & 1.5 & 0.25 & 0.2 & 10  & 7.21  & 0  & -  \\
2 & 1.5 & 0.25 & 0.5 & 10  & 4.34  & 0  & -  \\
3 & 1.5 & 0.25 & 0.7 & 10  & 2.36  & 0  & -  \\
4 & 1.5 & 0.25 & 1 & 10  & 1.43  & 0  & -  \\
5 & 1.5 & 0.5 & 0.2 & 0  & -  & 10  & 16.43\%  \\
6 & 1.5 & 0.5 & 0.5 & 9  & 151.6  & 1  & 1.52\%  \\
7 & 1.5 & 0.5 & 0.7 & 10  & 9.44  & 0  & -  \\
8 & 1.5 & 0.5 & 1 & 10  & 2.56  & 0  & -  \\
9 & 1.5 & 0.75 & 0.2 & 0  & -  & 10  & 31.38\%  \\
10 & 1.5 & 0.75 & 0.5 & 3  & 198.88  & 7  & 4.94\%  \\
11 & 1.5 & 0.75 & 0.7 & 8  & 146.67  & 2  & 1.76\%  \\
12 & 1.5 & 0.75 & 1 & 10  & 5.31  & 0  & -  \\
13 & 1.5 & 1 & 0.2 & 0  & -  & 10  & 36.13\%  \\
14 & 1.5 & 1 & 0.5 & 1  & 23.14  & 9  & 6.69\%  \\
15 & 1.5 & 1 & 0.7 & 9  & 39.23  & 1  & 6.0\%  \\
16 & 1.5 & 1 & 1 & 10  & 4.76  & 0  & -  \\
17 & 1.8 & 0.25 & 0.2 & 10  & 4.15  & 0  & -  \\
18 & 1.8 & 0.25 & 0.5 & 10  & 2.03  & 0  & -  \\
19 & 1.8 & 0.25 & 0.7 & 10  & 1.5  & 0  & -  \\
20 & 1.8 & 0.25 & 1 & 10  & 0.94  & 0  & -  \\
21 & 1.8 & 0.5 & 0.2 & 1  & 587.97  & 9  & 10.76\%  \\
22 & 1.8 & 0.5 & 0.5 & 9  & 61.41  & 1  & 1.47\%  \\
23 & 1.8 & 0.5 & 0.7 & 10  & 14.33  & 0  & -  \\
24 & 1.8 & 0.5 & 1 & 10  & 3.69  & 0  & -  \\
25 & 1.8 & 0.75 & 0.2 & 0  & -  & 10  & 31.53\%  \\
26 & 1.8 & 0.75 & 0.5 & 7  & 193.15  & 3  & 2.36\%  \\
27 & 1.8 & 0.75 & 0.7 & 10  & 11.97  & 0  & -  \\
28 & 1.8 & 0.75 & 1 & 10  & 4.71  & 0  & -  \\
29 & 1.8 & 1 & 0.2 & 0  & -  & 10  & 39.4\%  \\
30 & 1.8 & 1 & 0.5 & 0  & -  & 10  & 5.74\%  \\
31 & 1.8 & 1 & 0.7 & 8  & 42.99  & 2  & 4.14\%  \\
32 & 1.8 & 1 & 1 & 10  & 10.92  & 0  & -  \\
33 & 2.1 & 0.25 & 0.2 & 10  & 3.2  & 0  & -  \\
34 & 2.1 & 0.25 & 0.5 & 10  & 1.71  & 0  & -  \\
35 & 2.1 & 0.25 & 0.7 & 10  & 1.36  & 0  & -  \\
36 & 2.1 & 0.25 & 1 & 10  & 0.78  & 0  & -  \\
37 & 2.1 & 0.5 & 0.2 & 6  & 361.35  & 4  & 17.97\%  \\
38 & 2.1 & 0.5 & 0.5 & 10  & 55.07  & 0  & -  \\
39 & 2.1 & 0.5 & 0.7 & 10  & 5.82  & 0  & -  \\
40 & 2.1 & 0.5 & 1 & 10  & 2.83  & 0  & -  \\
41 & 2.1 & 0.75 & 0.2 & 0  & -  & 10  & 26.36\%  \\
42 & 2.1 & 0.75 & 0.5 & 6  & 66.56  & 4  & 4.55\%  \\
43 & 2.1 & 0.75 & 0.7 & 10  & 155.76  & 0  & -  \\
44 & 2.1 & 0.75 & 1 & 10  & 4.2  & 0  & -  \\
45 & 2.1 & 1 & 0.2 & 0  & -  & 10  & 35.24\%  \\
46 & 2.1 & 1 & 0.5 & 2  & 483.59  & 8  & 6.51\%  \\
47 & 2.1 & 1 & 0.7 & 6  & 81.55  & 4  & 4.19\%  \\
48 & 2.1 & 1 & 1 & 10  & 4.33  & 0  & -  \\
\end{tabular}
    \caption{Performance of the compact reformulation $(\FormAdj)$ for $\Gamma=3$.}
    \label{tab:G3}
\end{table}

\begin{table}[H]
    \centering
    \footnotesize
    \begin{tabular}{r r r r@{\hspace{1cm}}c r c r}
class & NC & RF & RS & solved & time(s) & unsolved & gap \\
\hline

1 & 1.5 & 0.25 & 0.2 & 10  & 17.98  & 0  & -  \\
2 & 1.5 & 0.25 & 0.5 & 10  & 9.45  & 0  & -  \\
3 & 1.5 & 0.25 & 0.7 & 10  & 4.16  & 0  & -  \\
4 & 1.5 & 0.25 & 1 & 10  & 2.23  & 0  & -  \\
5 & 1.5 & 0.5 & 0.2 & 0  & -  & 10  & 15.83\%  \\
6 & 1.5 & 0.5 & 0.5 & 8  & 162.29  & 2  & 1.62\%  \\
7 & 1.5 & 0.5 & 0.7 & 10  & 45.57  & 0  & -  \\
8 & 1.5 & 0.5 & 1 & 10  & 6.0  & 0  & -  \\
9 & 1.5 & 0.75 & 0.2 & 0  & -  & 10  & 29.86\%  \\
10 & 1.5 & 0.75 & 0.5 & 4  & 184.07  & 6  & 5.72\%  \\
11 & 1.5 & 0.75 & 0.7 & 7  & 51.11  & 3  & 1.11\%  \\
12 & 1.5 & 0.75 & 1 & 10  & 6.61  & 0  & -  \\
13 & 1.5 & 1 & 0.2 & 0  & -  & 10  & 36.17\%  \\
14 & 1.5 & 1 & 0.5 & 2  & 215.34  & 8  & 6.29\%  \\
15 & 1.5 & 1 & 0.7 & 9  & 26.63  & 1  & 3.1\%  \\
16 & 1.5 & 1 & 1 & 10  & 6.21  & 0  & -  \\
17 & 1.8 & 0.25 & 0.2 & 10  & 8.97  & 0  & -  \\
18 & 1.8 & 0.25 & 0.5 & 10  & 5.2  & 0  & -  \\
19 & 1.8 & 0.25 & 0.7 & 10  & 2.99  & 0  & -  \\
20 & 1.8 & 0.25 & 1 & 10  & 1.91  & 0  & -  \\
21 & 1.8 & 0.5 & 0.2 & 1  & 551.34  & 9  & 14.04\%  \\
22 & 1.8 & 0.5 & 0.5 & 9  & 126.62  & 1  & 2.68\%  \\
23 & 1.8 & 0.5 & 0.7 & 10  & 17.43  & 0  & -  \\
24 & 1.8 & 0.5 & 1 & 10  & 5.5  & 0  & -  \\
25 & 1.8 & 0.75 & 0.2 & 0  & -  & 10  & 30.75\%  \\
26 & 1.8 & 0.75 & 0.5 & 7  & 236.67  & 3  & 2.65\%  \\
27 & 1.8 & 0.75 & 0.7 & 10  & 18.03  & 0  & -  \\
28 & 1.8 & 0.75 & 1 & 10  & 6.39  & 0  & -  \\
29 & 1.8 & 1 & 0.2 & 0  & -  & 10  & 38.34\%  \\
30 & 1.8 & 1 & 0.5 & 0  & -  & 10  & 6.17\%  \\
31 & 1.8 & 1 & 0.7 & 8  & 74.08  & 2  & 5.4\%  \\
32 & 1.8 & 1 & 1 & 10  & 10.13  & 0  & -  \\
33 & 2.1 & 0.25 & 0.2 & 10  & 5.98  & 0  & -  \\
34 & 2.1 & 0.25 & 0.5 & 10  & 3.65  & 0  & -  \\
35 & 2.1 & 0.25 & 0.7 & 10  & 2.82  & 0  & -  \\
36 & 2.1 & 0.25 & 1 & 10  & 1.54  & 0  & -  \\
37 & 2.1 & 0.5 & 0.2 & 3  & 327.01  & 7  & 12.39\%  \\
38 & 2.1 & 0.5 & 0.5 & 9  & 26.93  & 1  & 2.38\%  \\
39 & 2.1 & 0.5 & 0.7 & 10  & 10.6  & 0  & -  \\
40 & 2.1 & 0.5 & 1 & 10  & 4.98  & 0  & -  \\
41 & 2.1 & 0.75 & 0.2 & 0  & -  & 10  & 26.76\%  \\
42 & 2.1 & 0.75 & 0.5 & 6  & 72.76  & 4  & 5.06\%  \\
43 & 2.1 & 0.75 & 0.7 & 9  & 69.47  & 1  & 1.23\%  \\
44 & 2.1 & 0.75 & 1 & 10  & 6.67  & 0  & -  \\
45 & 2.1 & 1 & 0.2 & 0  & -  & 10  & 33.59\%  \\
46 & 2.1 & 1 & 0.5 & 3  & 446.55  & 7  & 6.81\%  \\
47 & 2.1 & 1 & 0.7 & 7  & 140.71  & 3  & 3.57\%  \\
48 & 2.1 & 1 & 1 & 10  & 7.15  & 0  & -  \\

\end{tabular}
    \caption{Performance of the compact reformulation $(\FormAdj)$ for $\Gamma=5$.}
    \label{tab:G5}
\end{table}

\begin{table}[H]
    \centering
    \footnotesize
    \begin{tabular}{r r r r@{\hspace{1cm}}c r c r}
class & NC & RF & RS & solved & time(s) & unsolved & gap \\
\hline
1 & 1.5 & 0.25 & 0.2 & 10  & 66.63  & 0  & -  \\
2 & 1.5 & 0.25 & 0.5 & 10  & 12.67  & 0  & -  \\
3 & 1.5 & 0.25 & 0.7 & 10  & 4.66  & 0  & -  \\
4 & 1.5 & 0.25 & 1 & 10  & 3.77  & 0  & -  \\
5 & 1.5 & 0.5 & 0.2 & 0  & -  & 10  & 17.86\%  \\
6 & 1.5 & 0.5 & 0.5 & 7  & 186.02  & 3  & 3.55\%  \\
7 & 1.5 & 0.5 & 0.7 & 10  & 25.1  & 0  & -  \\
8 & 1.5 & 0.5 & 1 & 10  & 6.53  & 0  & -  \\
9 & 1.5 & 0.75 & 0.2 & 0  & -  & 10  & 30.86\%  \\
10 & 1.5 & 0.75 & 0.5 & 3  & 269.24  & 7  & 6.02\%  \\
11 & 1.5 & 0.75 & 0.7 & 7  & 149.01  & 3  & 2.16\%  \\
12 & 1.5 & 0.75 & 1 & 10  & 10.02  & 0  & -  \\
13 & 1.5 & 1 & 0.2 & 0  & -  & 10  & 38.03\%  \\
14 & 1.5 & 1 & 0.5 & 1  & 45.89  & 9  & 7.24\%  \\
15 & 1.5 & 1 & 0.7 & 9  & 39.59  & 1  & 5.17\%  \\
16 & 1.5 & 1 & 1 & 10  & 9.48  & 0  & -  \\
17 & 1.8 & 0.25 & 0.2 & 10  & 16.87  & 0  & -  \\
18 & 1.8 & 0.25 & 0.5 & 10  & 5.01  & 0  & -  \\
19 & 1.8 & 0.25 & 0.7 & 10  & 3.88  & 0  & -  \\
20 & 1.8 & 0.25 & 1 & 10  & 2.39  & 0  & -  \\
21 & 1.8 & 0.5 & 0.2 & 1  & 847.24  & 9  & 15.06\%  \\
22 & 1.8 & 0.5 & 0.5 & 8  & 231.58  & 2  & 1.9\%  \\
23 & 1.8 & 0.5 & 0.7 & 10  & 18.29  & 0  & -  \\
24 & 1.8 & 0.5 & 1 & 10  & 8.74  & 0  & -  \\
25 & 1.8 & 0.75 & 0.2 & 0  & -  & 10  & 32.62\%  \\
26 & 1.8 & 0.75 & 0.5 & 6  & 68.88  & 4  & 2.75\%  \\
27 & 1.8 & 0.75 & 0.7 & 10  & 23.98  & 0  & -  \\
28 & 1.8 & 0.75 & 1 & 10  & 12.48  & 0  & -  \\
29 & 1.8 & 1 & 0.2 & 0  & -  & 10  & 39.47\%  \\
30 & 1.8 & 1 & 0.5 & 0  & -  & 10  & 7.24\%  \\
31 & 1.8 & 1 & 0.7 & 8  & 47.78  & 2  & 8.07\%  \\
32 & 1.8 & 1 & 1 & 10  & 13.33  & 0  & -  \\
33 & 2.1 & 0.25 & 0.2 & 10  & 8.95  & 0  & -  \\
34 & 2.1 & 0.25 & 0.5 & 10  & 4.86  & 0  & -  \\
35 & 2.1 & 0.25 & 0.7 & 10  & 4.05  & 0  & -  \\
36 & 2.1 & 0.25 & 1 & 10  & 2.18  & 0  & -  \\
37 & 2.1 & 0.5 & 0.2 & 4  & 624.37  & 6  & 14.42\%  \\
38 & 2.1 & 0.5 & 0.5 & 10  & 44.43  & 0  & -  \\
39 & 2.1 & 0.5 & 0.7 & 10  & 18.27  & 0  & -  \\
40 & 2.1 & 0.5 & 1 & 10  & 7.03  & 0  & -  \\
41 & 2.1 & 0.75 & 0.2 & 0  & -  & 10  & 26.71\%  \\
42 & 2.1 & 0.75 & 0.5 & 6  & 98.71  & 4  & 4.96\%  \\
43 & 2.1 & 0.75 & 0.7 & 8  & 71.07  & 2  & 1.11\%  \\
44 & 2.1 & 0.75 & 1 & 10  & 9.38  & 0  & -  \\
45 & 2.1 & 1 & 0.2 & 0  & -  & 10  & 34.48\%  \\
46 & 2.1 & 1 & 0.5 & 3  & 589.12  & 7  & 7.56\%  \\
47 & 2.1 & 1 & 0.7 & 6  & 197.8  & 4  & 2.67\%  \\
48 & 2.1 & 1 & 1 & 10  & 8.61  & 0  & -  \\
\end{tabular}
    \caption{Performance of the compact reformulation $(\FormAdj)$ for $\Gamma=7$.}
    \label{tab:G7}
\end{table}

\bibliographystyle{plainnat}
\bibliography{anchorRCPSP}

\end{document}